\crefname{hypothesis}{Hypothesis}{Hypotheses}
\title{A linearly convergent method for solving high-order proximal operator\thanks{Submitted to the editors DATE.
}}
\author{Jingyu Gao\thanks{Aerospace Information Research Institute, Chinese    Academy of Sciences; School of Electronic, Electrical and Communication Engineering, University of Chinese Academy of Sciences; Key Laboratory of Technology in Geo-Spatial Information Processing and Application System, Chinese Academy of Sciences 
  .}
\and Xiurui Geng\thanks{Aerospace Information Research Institute, Chinese    Academy of Sciences; School of Electronic, Electrical and Communication Engineering, University of Chinese Academy of Sciences; Key Laboratory of Technology in Geo-Spatial Information Processing and Application System, Chinese Academy of Sciences 
  (\email{genxr@sina.com.cn}).}
}
\begin{document}

\maketitle

% REQUIRED
\begin{abstract}
  Recently, various high-order methods have been developed to solve the convex optimization problem. The auxiliary problem of these methods shares the general form that is the same as the high-order proximal operator proposed by  Nesterov. In this paper, we present a linearly convergent method to solve the high-order proximal operator based on the classical proximal operator. In addition, some experiments are performed to demonstrate the performance of the proposed method.
\end{abstract}

% REQUIRED
\begin{keywords}
  Convex optimization, Proximal point operator, linear convergence
\end{keywords}

% REQUIRED
\begin{MSCcodes}
90C25
\end{MSCcodes}

\section{Introduction}
In this work, we consider the optimization problems with the regularization term which is a power of a norm
\begin{equation}\label{equ1}
  \mathop {\min }\limits_{x \in \mathbb{R} ^{n}  } \{ F(x) = f(x) + \frac{\sigma }{{1 + p}}\left\lVert x-c\right\rVert^{p+1} \},
\end{equation}
where $ f:\mathbb{R} ^{n} \to \mathbb{R} $ is a closed proper convex function, and $p\geq 1, \sigma >0, c \in \mathbb{R} ^{n}$. Numerous instances of \cref{equ1} can be found in the literature. For example, the subproblem of the proximal point algorithm (PPA) \cite{moreau1965proximite} has the same structure of \cref{equ1} with $p=1$. The PPA is a fundamental method
in optimization theory, serving as the progenitor to a number of famous methods such as the augmented Lagrangian method (ALM) \cite{hestenes1969multiplier, powell1969method}, the alternating direction method of multipliers (ADMM) \cite{boyd2011distributed, hong2017linear}, the Douglas-Rachford
operator splitting method (DRSM) \cite{lions1979splitting, combettes2007douglas}, and so on. In the case of $p=1$, $F(x)$ is strongly convex, and the gradient descent method for \cref{equ1} has the linear convergence rate, assuming the gradient of $f$ is Lipschitz continuous. Thus, the computational expense of solving the regularized problem \cref{equ1} with $p=1$ is economical.

For $p=2$, the optimization problem \cref{equ1} emerges as the subproblem of cubic regularization method (CRM) \cite{nesterov2006cubic}. In this case, $f$ is a quadratic function, i.e.
\begin{equation}\label{equ2}
  f(x)=\frac{1}{2} x^{T}Ax+b^{T}x,
\end{equation}
where $A\succeq 0$ and $b\in \mathbb{R} ^{n}$. The cubic regularization method (CRM) is a variant of the classical Newton method for solving the unconstrained optimization problems \cite{nesterov2006cubic}. Under some mild assumptions, the CRM converges to the second order critical points, and has the convergence rate $O ({k^{{{ - 2} \mathord{\left/
{\vphantom {{ - 2} 3}} \right.
\kern-\nulldelimiterspace} 3}}})$ for the norms of the gradients. 
For $p=3$, the optimization problem \cref{equ1} appears to be the auxiliary problem in the third-order tensor method \cite{nesterov2021implementable}. Moreover, the subproblem of the new second-order method based on quartic regulation \cite{nesterov2022quartic} has the same form of \cref{equ1}.

Recently, Nesterov  has extended the PPA formulation to encompass arbitrary order $p \geq 1$ \cite{nesterov2021inexact, nesterov2021inexact2}. The iterative scheme of the  high-order PPA can be expressed as
\begin{equation}\label{equ3}
  x^{k+1} = \mathop {\arg \min }\limits_{x \in \mathbb{R} ^{n} } \{  f(x) + \frac{\sigma }{{1 + p}}\left\lVert x-x^{k}\right\rVert^{p+1} \}, \quad \quad k\geq 0.
\end{equation}
Assume that the subproblem \cref{equ3} can be solved inexactly at each iteration, the convergence rate of the high-order PPA is that
$f(x^{k})-\min f \leq O (1/k^{p})$, and the accelerated version of the high-order PPA converges as  $O (1/k^{p+1})$.

In general, the regularization problem \cref{equ1} with $p>1$ tends to emerge within higher-order methods, which is proved to converge much faster than the first-order method in theory. However, the regularization problem \cref{equ1} with $p>1$ is more complicated than the case of $p=1$. Thus, the problem of exploiting the relationship between the classical proximal operator and the $p$th-order proximal operator ($p>1$) is meaningful. Specifically, this problem can be summarized as follows: Can we design an efficient method  to solve the $p$th-order proximal operator ($p>1$) based on the classical proximal operator?
In this paper, we  provide an affirmation to this problem by proposing a linearly convergent method. 

% The outline is not required, but we show an example here.
The paper is organized as follows. In \cref{sec:pre}, we introduce some basic notations and properties. In \cref{sec:dual}, we derive the dual problem of the regularization problem \cref{equ1}. Our main results are presented in \cref{main}. A linearly convergent method is developed to solve the high-order proximal operator. In \cref{sec:special}, we consider the regularization problem \cref{equ1} with the special case $p=2$, and it can be transformed into solving a one-dimensional monotonic continuity equation.  In \cref{sec:experiments}, we conduct some numerical experiments to demonstrate the performance of the method proposed in \cref{main}.

\section{Preliminaries}
\label{sec:pre}
Given a convex function $f$, its conjugate function $f^{\ast }$ is defined as 
\begin{equation}\label{equ4}
  f^{\ast }(y)=\mathop {\sup }\limits_x \left\{x^{T}y-f(x)\right\}  . \nonumber
\end{equation}
For example, when $f(x) = I_{\left\{b\right\}} (x)$, where $I_{\left\{b\right\}} (x)$ is the characteristic function of a single point set $ \left\{b\right\} $, it holds that
\begin{equation}\label{equ5}
  f^{\ast }(y)=b^{T}y. 
\end{equation}
The following lemma gives an important property of the conjugate function \cite{boyd2004convex}.
\begin{lemma}\label{newlemma_1}
  Suppose $f\left(x\right) $ is a closed proper convex function, and $f^{\ast }\left(y\right) $ is the corresponding conjugate function. Then,
  \begin{equation}\label{equ6}
      y\in \partial f\left(x\right) \Leftrightarrow x \in \partial f^{\ast }\left(y\right).
  \end{equation}
\end{lemma}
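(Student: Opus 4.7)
The plan is to prove both directions via the Fenchel--Young inequality together with the biconjugacy theorem ($f^{**}=f$ for closed proper convex $f$).

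First, I would recall the Fenchel--Young inequality: by definition of $f^{\ast}$, for all $x,y$ one has $f^{\ast}(y)\geq x^{T}y-f(x)$, i.e.
\begin{equation}
f(x)+f^{\ast}(y)\geq x^{T}y. \nonumber
\end{equation}
The key equivalence I will establish is that equality in Fenchel--Young characterizes subgradients: namely, $y\in\partial f(x)$ if and only if $f(x)+f^{\ast}(y)=x^{T}y$.

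To prove this equality--subgradient link, I would unpack definitions in both directions. If $y\in\partial f(x)$, then $f(z)\geq f(x)+y^{T}(z-x)$ for all $z$, which rearranges to $y^{T}x-f(x)\geq y^{T}z-f(z)$ for every $z$, so the supremum defining $f^{\ast}(y)$ is attained at $z=x$, giving $f^{\ast}(y)=x^{T}y-f(x)$. Conversely, if $f(x)+f^{\ast}(y)=x^{T}y$, then for every $z$ we have $y^{T}z-f(z)\leq f^{\ast}(y)=x^{T}y-f(x)$, which rearranges to the subgradient inequality $f(z)\geq f(x)+y^{T}(z-x)$, hence $y\in\partial f(x)$.

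Next, I would apply the same equivalence to $f^{\ast}$ in place of $f$: $x\in\partial f^{\ast}(y)$ if and only if $f^{\ast}(y)+f^{\ast\ast}(x)=y^{T}x$. Since $f$ is closed proper convex, the Fenchel--Moreau biconjugacy theorem yields $f^{\ast\ast}=f$, so this condition becomes $f(x)+f^{\ast}(y)=x^{T}y$, which is exactly the same equality that characterized $y\in\partial f(x)$. Stringing these equivalences together gives the desired biconditional.

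The main obstacle, conceptually, is the appeal to $f^{\ast\ast}=f$, which is where the hypothesis that $f$ is closed proper convex is essential; without it, only one implication would go through. The rest of the argument is straightforward manipulation of the definitions, so I do not anticipate further technical difficulty.
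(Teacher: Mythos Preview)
Your proof is correct and is the standard argument via Fenchel--Young equality and the biconjugacy theorem $f^{\ast\ast}=f$. Note, however, that the paper does not supply its own proof of this lemma: it is stated as a known property of conjugate functions with a citation to Boyd and Vandenberghe, so there is no in-paper proof to compare against. Your argument is precisely the textbook derivation one would find behind that citation.
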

Following the definition in \cite{nesterov2021inexact}, the $p$th-order proximal operator is defined as 
\begin{equation}\label{equ7}
  \mathsf{P}\mathrm{rox}_{ f/\sigma}^{p}\left(x\right)  : = \arg \min \left\{ F(y)=f\left(y\right) +\frac{\sigma}{p+1} \left\lVert y-x\right\rVert ^{p+1}\vert \; y\in \mathbb{R} ^{n}\right\},    
\end{equation}
where $\sigma>0$ and $p\geq1$. For the sake of the notation, we denote $\mathsf{P}\mathrm{rox}_{ f/\sigma}$ as the classical proximal operator, i.e. $p=1$.  First, we prove that $\mathsf{P}\mathrm{rox}_{ f/\lambda}^{p}$ is well defined if $f$ is a closed proper convex function.
\begin{lemma}\label{newlemma_2}
  Suppose $f\left(x\right) $ is a closed proper convex function. Then, for $\forall x \in \mathbb{R} ^{n}$, $\mathsf{P}\mathrm{rox}_{ f/\lambda}^{p}\left(x\right)$ uniquely exists.
\end{lemma}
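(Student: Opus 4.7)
The plan is to establish existence of a minimizer first, then uniqueness, by showing that $F(y) := f(y) + \frac{\sigma}{p+1}\|y-x\|^{p+1}$ is a proper, lower semicontinuous, coercive, and strictly convex function on $\mathbb{R}^n$.

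First, I would check the standard hypotheses of the direct method. Since $f$ is proper closed convex, there exists $y_0$ with $f(y_0) < \infty$, so $F(y_0) < \infty$ and $F$ is proper. Both $f$ and $y \mapsto \|y-x\|^{p+1}$ are lower semicontinuous, so $F$ is lower semicontinuous. Since $f$ is closed proper convex, it admits an affine minorant $f(y) \geq a^\top y + b$ for some $a \in \mathbb{R}^n$, $b \in \mathbb{R}$. Because $p+1 \geq 2$, the quantity $\frac{\sigma}{p+1}\|y-x\|^{p+1}$ grows faster than any linear function as $\|y\| \to \infty$, so $F(y) \to +\infty$. Therefore any sublevel set $\{y : F(y) \leq F(y_0)\}$ is nonempty, closed, and bounded, hence compact, and a minimizer exists.

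Next, for uniqueness, I would argue that $F$ is strictly convex. The function $f$ is convex, so it suffices to show that $g(y) := \|y-x\|^{p+1}$ is strictly convex for $p \geq 1$. Given $y_1 \neq y_2$ in $\mathbb{R}^n$ and $\theta \in (0,1)$, set $u_i = y_i - x$. By the triangle inequality,
\begin{equation}
\|\theta u_1 + (1-\theta) u_2\| \leq \theta \|u_1\| + (1-\theta)\|u_2\|, \nonumber
\end{equation}
with equality only when $u_1$ and $u_2$ are nonnegative multiples of one another. Then, applying the strictly convex scalar function $t \mapsto t^{p+1}$ on $[0,\infty)$ (strict because $p+1 > 1$),
\begin{equation}
(\theta \|u_1\| + (1-\theta)\|u_2\|)^{p+1} \leq \theta \|u_1\|^{p+1} + (1-\theta)\|u_2\|^{p+1}, \nonumber
\end{equation}
with equality only when $\|u_1\| = \|u_2\|$. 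If both inequalities were equalities, then $u_1$ and $u_2$ would be nonnegative multiples with equal norm, forcing $u_1 = u_2$ and hence $y_1 = y_2$, a contradiction. Thus $g$ is strictly convex, and so is $F$, which gives uniqueness of the minimizer.

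The main subtlety I anticipate is precisely the strict convexity step: a naive appeal to convexity of $t^{p+1}$ is not enough, because one also needs the norm to behave strictly along noncollinear directions. Handling these two failure modes simultaneously via the chained inequality above is the cleanest way. Existence by coercivity is routine and should not present difficulty.
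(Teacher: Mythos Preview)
Your proof is correct. For existence, you and the paper do essentially the same thing: an affine minorant of $f$ combined with the superlinear growth of the regularizer gives coercivity, and lower semicontinuity plus compact sublevel sets yields a minimizer.

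For uniqueness, however, you take a genuinely different route. The paper argues via first-order optimality conditions: it writes $\|x-y_i\|^{p-1}(x-y_i)\in\frac{1}{\sigma}\partial f(y_i)$ for two putative minimizers, invokes monotonicity of $\partial f$, and reduces to $(u_1-u_2)^T(\|u_1\|^{p-1}u_1-\|u_2\|^{p-1}u_2)=0$ with $u_i=x-y_i$, then asserts this forces $u_1=u_2$. You instead prove strict convexity of $y\mapsto\|y-x\|^{p+1}$ directly, by chaining the triangle inequality with strict convexity of $t\mapsto t^{p+1}$ and carefully analyzing the joint equality case. Your approach is more self-contained: the paper's final implication tacitly requires strict monotonicity of $\nabla g(u)=\|u\|^{p-1}u$, which is exactly the strict convexity you establish explicitly. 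Conversely, the paper's optimality-condition framing previews the monotonicity machinery used heavily in the later convergence analysis, so it is more in keeping with the rest of the manuscript.
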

\begin{proof}
  Since $f$ is a closed proper convex function, $\mathbf{ri}\ \mathbf{dom}f$ is nonempty \cite{ryu2022large}. Taking $y_0 \in \mathbf{ri}\ \mathbf{dom}f$, and we have 
  \begin{equation}\label{equ8}
    f(y)+\frac{\sigma}{p+1}\left\lVert y-x\right\rVert^{p+1}   \geq f(y_0)+g^{T}(y-y_0)+\frac{\sigma}{p+1}\left\lVert y-x\right\rVert^{p+1},
  \end{equation}
  where $g \in \partial f(y_0)$. In view of \cref{equ8}, it can be easily obtained that
  \begin{equation}\label{equ9}
    \lim_{\left\lVert y\right\rVert  \to \infty}  F(y) = \infty.
  \end{equation}
  Thus, $F(y^{k})\to \inf F(y)$ implies that $\left\{{y^{k}}\right\}_{k\geq 0}$ is bounded. Due to the lower semicontinuity of $F$, $F(y^{\ast}) \leq \inf F(y) $, where $y^{k_j} \to y^{\ast}$ and $\left\{y^{k_j}\right\}$ is a convergent subsequence of $\left\{{y^{k}}\right\}_{k\geq 0}$. Therefore, $F(y^{\ast}) = \inf F(y) $. Proof of existence completed.

  Assume that  $F(y_1)=F(y_2)= \inf F(y)$, then we have 
  \begin{equation}\label{equ10}
    \left\lVert x-y_1\right\rVert^{p-1}(x-y_1) \in \frac{1}{\sigma} \partial f(y_1) .
  \end{equation}
  \begin{equation}\label{equ11}
    \left\lVert x-y_2\right\rVert^{p-1}(x-y_2) \in \frac{1}{\sigma} \partial f(y_2) .
  \end{equation}
  Since $f$ is convex, it holds that 
  \begin{equation}\label{equ12}
    (y_1-y_2)^{T}\left\{\left\lVert x-y_1\right\rVert^{p-1} (x-y_1)-\left\lVert x-y_2\right\rVert^{p-1} (x-y_2)\right\} \geq 0.
  \end{equation}
  Let $u_1 = x-y_1$ and $u_2 = x-y_2$, \cref{equ12} turns into 
  \begin{equation}\label{equ13}
    (u_1-u_2)^{T}(\left\lVert u_1\right\rVert^{p-1}u_1 -\left\lVert u_2\right\rVert^{p-1}u_2) \leq 0.
  \end{equation}
  Note that $g(u)=\frac{1}{p+1} \left\lVert u\right\rVert ^{p+1}$ is convex, and $\nabla g(u) = \left\lVert u\right\rVert ^{p-1}u$, we have $(u_1-u_2)^{T}(\left\lVert u_1\right\rVert^{p-1}u_1 -\left\lVert u_2\right\rVert^{p-1}u_2) \geq 0$. Therefore, $(u_1-u_2)^{T}(\left\lVert u_1\right\rVert^{p-1}u_1 -\left\lVert u_2\right\rVert^{p-1}u_2) = 0$ holds. This implies that $u_1=u_2$, i.e. $y_1=y_2$. Proof of uniqueness completed.
\end{proof}

Finally, we introduce an important function which is frequently used in the following sections
\begin{equation}\label{equ14}
  i_p(x) = \left\{ {\begin{array}{*{20}{r}}
    \frac{x}{\left\lVert x\right\rVert ^{1-\frac{1}{p} }}\qquad x\neq 0\\
    0  \quad  \qquad x=0
    \end{array}} \right. .
\end{equation}
Note that $i_p(x)$ is the gradient of the convex function $\frac{1}{1+\frac{1}{p} } \left\lVert x\right\rVert ^{1+\frac{1}{p}}$.

\section{Dual pth-order PPA}
\label{sec:dual}

In this section, we derive the dual problem of the regularization problem \cref{equ1}. Denote $f_1(x) = f(x)$ and $f_2(x)= \frac{\sigma }{{1 + p}}\left\lVert x-c\right\rVert^{p+1}$. The dual problem of \cref{equ1} is
\begin{equation}\label{equ15}
  \mathop {\min }\limits_{\lambda \in \mathbb{R} ^{n}  } \{ f_1^{\ast}(\lambda)+ f_2^{\ast}(-\lambda)\}.
\end{equation}
In the following lemma, we will see that $f_2^{\ast}(\lambda)$ has an explicit expression. 
\begin{lemma}\label{newlemma_3}
  If $f_2(x) = \frac{\sigma }{{1 + p}}\left\lVert x-c\right\rVert^{p+1}$, then 
  \begin{equation}\label{equ16}
    f_2^{\ast}(\lambda) = \lambda^{T}c+\frac{\sigma^{-\frac{1}{p} }}{1+\frac{1}{p} }\left\lVert \lambda\right\rVert^{1+\frac{1}{p} }  .
  \end{equation}
\end{lemma}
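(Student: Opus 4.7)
The plan is to compute $f_2^*(\lambda) = \sup_{x}\{\lambda^T x - \frac{\sigma}{p+1}\|x-c\|^{p+1}\}$ by first isolating the translation and the scaling, then reducing to the classical Fenchel pair for powers of the Euclidean norm. I would begin by substituting $u = x - c$ so that the supremum splits as
\begin{equation}
f_2^*(\lambda) = \lambda^T c + \sup_{u\in\mathbb{R}^n}\Bigl\{\lambda^T u - \tfrac{\sigma}{p+1}\|u\|^{p+1}\Bigr\},\nonumber
\end{equation}
which immediately produces the advertised linear term $\lambda^T c$ and leaves a rotationally invariant sup to evaluate.

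Next, I would evaluate the remaining supremum by first-order optimality. The objective in $u$ is concave and differentiable (for $p\ge 1$), with stationarity condition $\lambda = \sigma\|u\|^{p-1}u$. Taking Euclidean norms gives $\|u\| = (\|\lambda\|/\sigma)^{1/p}$, which together with the stationarity equation determines $u$ explicitly in terms of $\lambda$. Substituting back, $\lambda^T u$ and $\frac{\sigma}{p+1}\|u\|^{p+1}$ both reduce to constant multiples of $\sigma^{-1/p}\|\lambda\|^{1+1/p}$, and using the algebraic identity $1 - \frac{1}{p+1} = \frac{1}{1+1/p}$ collapses them into the single term $\frac{\sigma^{-1/p}}{1+1/p}\|\lambda\|^{1+1/p}$, which is exactly the claim.

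An essentially equivalent but slightly cleaner route is to first establish the base identity that the conjugate of $h(x) = \frac{1}{p+1}\|x\|^{p+1}$ is $h^*(\lambda) = \frac{1}{1+1/p}\|\lambda\|^{1+1/p}$ (the Hölder-conjugate pair of powers), and then apply two standard rules: the translation rule $(h(\,\cdot\,-c))^*(\lambda) = \lambda^T c + h^*(\lambda)$, followed by the positive-scaling rule $(\sigma g)^*(\lambda) = \sigma g^*(\lambda/\sigma)$. Combining them and simplifying $\sigma\cdot\sigma^{-(1+1/p)} = \sigma^{-1/p}$ yields the result without re-deriving the stationary point.

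The only real obstacle is bookkeeping of $\sigma$-exponents and of the companion index $1+1/p$; there is no genuine analytic difficulty since strict concavity in $u$ (strict convexity of $\|\cdot\|^{p+1}$ for $p\ge 1$) guarantees that the stationary point is the unique maximizer and that the supremum is attained.
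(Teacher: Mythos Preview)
Your proposal is correct and follows essentially the same approach as the paper: set up the supremum, solve the first-order condition $\lambda=\sigma\|u\|^{p-1}u$ (the paper works directly in $y$ rather than substituting $u=y-c$, and splits off the case $\lambda=0$ explicitly), take norms to get $\|u\|=\sigma^{-1/p}\|\lambda\|^{1/p}$, and substitute back. Your alternative via the translation and scaling rules for conjugates is a tidy repackaging of the same computation rather than a different argument.
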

\begin{proof}
  According to definition of the conjugate function, we have 
  \begin{equation}\label{equ17}
    f_2^{\ast}(\lambda) = \mathop {\sup }\limits_y  \{\lambda^{T}y-\frac{\sigma}{p+1} \left\lVert y-c\right\rVert^{p+1}\}.
  \end{equation}
  Due to the first-order optimal condition, the optimal solution of \cref{equ17} $y^{\ast}$ satisfies that 
  \begin{equation}\label{equ18}
    \lambda = \sigma \left\lVert y^{\ast}-c\right\rVert^{p-1}(y^{\ast}-c) .
  \end{equation}
  Thus, $\left\lVert y^{\ast}-c\right\rVert = \sigma^{-\frac{1}{p} }\left\lVert \lambda \right\rVert ^{\frac{1}{p} }$. Next, we discuss the two cases of $\lambda=0$ and $\lambda \neq 0$ respectively.

  Case 1 ($\lambda=0$): According to \cref{equ18}, $y^{\ast} = c$,  and $f_2^{\ast}(0) = 0$.

  Case 2 ($\lambda \neq 0$): According to \cref{equ18}, $y^{\ast} = c+\sigma^{-\frac{1}{p} }\frac{\lambda}{\left\lVert \lambda\right\rVert^{1-\frac{1}{p} } } $, and 
  \begin{equation}\label{equ19}
    \begin{aligned}
      f_2^{\ast}(\lambda) &= \lambda^{T}y^{\ast}-\frac{\sigma}{p+1} \left\lVert y^{\ast}-c\right\rVert^{p+1} \\
      &= \lambda^{T}c+\frac{\sigma^{-\frac{1}{p} }}{1+\frac{1}{p} }\left\lVert \lambda\right\rVert ^{1+\frac{1}{p} } 
      \end{aligned}.
  \end{equation}
\end{proof}

Therefore, the dual problem can be expressed as 
\begin{equation}\label{equ20}
  \mathop {\min }\limits_{\lambda \in \mathbb{R} ^{n}  } \{ f_1^{\ast}(\lambda)-\lambda^{T}c+\frac{\sigma^{-\frac{1}{p} }}{1+\frac{1}{p} }\left\lVert \lambda\right\rVert ^{1+\frac{1}{p} } \}.
\end{equation}
Using the notation \cref{equ14}, the optimal solution $\lambda^{\ast}$ of \cref{equ20} satisfies $c-\sigma^{-\frac{1}{p} }i_p(\lambda^{\ast}) \in \partial f_1^{\ast}(\lambda^{\ast})$. According to \cref{newlemma_1}, it holds that 
\begin{equation}\label{equ21}
  \lambda^{\ast} \in \partial f_1(c-\sigma^{-\frac{1}{p} }i_p(\lambda^{\ast})).
\end{equation}
After some simple calculation, it also holds that
\begin{equation}\label{equ22}
  -\lambda^{\ast} \in \partial f_2(c-\sigma^{-\frac{1}{p} }i_p(\lambda^{\ast})).
\end{equation}
Thus, $0\in \partial f_1(c-\sigma^{-\frac{1}{p} }i_p(\lambda^{\ast}))+\partial f_2(c-\sigma^{-\frac{1}{p} }i_p(\lambda^{\ast}))$, and $\mathsf{P}\mathrm{rox}_{ f/\sigma}^{p}\left(c\right) = c-\sigma^{-\frac{1}{p} }i_p(\lambda^{\ast})$ ($\mathsf{P}\mathrm{rox}_{ f/\sigma}^{p}$ is well defined, see in \cref{newlemma_2}). The detailed procedure of the dual $p$th-order PPA is given in \cref{algorithm1}.
\begin{algorithm} 
  \caption{Dual $p$th-order PPA}
  \label{algorithm1}
  \begin{algorithmic}[1]
  \STATE{Require: $x_0, K$}
  \FOR{$k = 0,1,2,\ldots K $}
  \STATE\label{l1}{$\lambda_{k+1}=\mathop {\min }\limits_{\lambda \in \mathbb{R} ^{n}  } \{ f^{\ast}(\lambda)-\lambda^{T}x_k+\frac{\sigma^{-\frac{1}{p} }}{1+\frac{1}{p} }\left\lVert \lambda\right\rVert ^{1+\frac{1}{p} } \}$}
  \STATE\label{l2}{$x_{k+1}=x_k-\sigma^{-\frac{1}{p} }i_p(\lambda^{k+1})$}
  \ENDFOR
  \RETURN{$x_{K}=\mathsf{P}\mathrm{rox}_{t_{K} f}\left(c\right)$}
  \end{algorithmic}
\end{algorithm}
\begin{remark}
  The regularization term in the dual problem of \cref{equ1} is $\left\lVert \lambda\right\rVert ^{1+\frac{1}{p} }$, and its power is less than 2. Therefore, the dual problem \cref{equ19} can be easier  to be solved compared to the prime problem \cref{equ1}.
\end{remark}

\section{Main method}
\label{main}
In this section, we focus on the dual problem \cref{equ20}, which shares the general form 
\begin{equation}\label{equ23}
  \mathop {\min }\limits_{\lambda \in \mathbb{R} ^{n}  } \{ g(\lambda)+\frac{\mu}{1+\frac{1}{p} }\left\lVert \lambda\right\rVert ^{1+\frac{1}{p} } \},
\end{equation}
where $\mu>0$ and $g$ is a convex function. Denote by $\lambda^{\ast }$ the optimal solution of \cref{equ23}. The optimal condition for \cref{equ23} can be expressed in the form of the variational inequality
\begin{equation}\label{equ24}
  (\lambda-\lambda^{\ast})^{T}\left\{h^{\ast}+\mu i_p(y^{\ast})\right\} \geq 0,  \quad \forall \lambda \in \mathbb{R} ^{n},
\end{equation}
where $h^{\ast} \in \partial g(\lambda^{\ast})$.

Our method for \cref{equ23} is the following iteration scheme.

\begin{algorithm} 
  \caption{Fixed point iteration for the \cref{equ23}}
  \label{algorithm2}
  \begin{algorithmic}[1]
  \STATE{Require: $\lambda^{0} \in \mathbb{R}^{n}, \; K$}
  \WHILE{$k = 1,2,...,K$}
     \IF{$\lambda^{k} \neq 0$}
        \STATE $t^{k} = \left\lVert \lambda^{k}\right\rVert ^{\frac{1}{p} -1}$ ; 
     \ELSE
       \STATE $t^{k} =0$;
     \ENDIF 
     \STATE $\lambda ^{k+1} = \arg  \min \{g\left(\lambda\right)+ \frac{\mu }{2 }t_k\left\lVert \lambda\right\rVert ^{2 } \vert \; \lambda\in\mathbb{R}^{n}   \} $
  \ENDWHILE
  \RETURN $\lambda^{K}$
  \end{algorithmic}
\end{algorithm}

Next, we discuss the convergence of \cref{algorithm2} in two cases.

 Case 1: $\lambda^{\ast }=0$
 \begin{theorem}\label{newTheorem1}
  Suppose $\lambda^{\ast }=0$ and $ \left\lVert \lambda^{k}\right\rVert >0$. Then, $\lambda^{k+1}$ generated by \cref{algorithm2} satisfies  that $\lambda^{k+1} = \lambda^{\ast }$.
\end{theorem}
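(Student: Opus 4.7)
The plan is to compare the optimality condition of the master problem \eqref{equ23} at $\lambda^{\ast}=0$ with the optimality condition of the quadratically regularized subproblem that defines $\lambda^{k+1}$ in \cref{algorithm2}, and to show that $\lambda=0$ satisfies the latter.

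First I would use the variational inequality \eqref{equ24}, which on $\mathbb{R}^n$ (an unconstrained domain) is equivalent to the inclusion $0 \in \partial g(\lambda^{\ast}) + \mu\, i_p(\lambda^{\ast})$. Plugging in $\lambda^{\ast}=0$ and using $i_p(0)=0$ from the definition \eqref{equ14}, this collapses to $0 \in \partial g(0)$. That is the single fact about $g$ I would carry forward.

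Next I would write down the first-order optimality condition for the strongly convex subproblem solved in \cref{algorithm2}, namely
\begin{equation*}
0 \in \partial g(\lambda^{k+1}) + \mu\, t^{k}\, \lambda^{k+1}.
\end{equation*}
Since $\|\lambda^{k}\|>0$ we have $t^{k} = \|\lambda^{k}\|^{1/p-1} > 0$, so the objective $g(\lambda)+\frac{\mu}{2}t^{k}\|\lambda\|^{2}$ is strongly convex and its minimizer is unique. I would then verify by substitution that $\lambda=0$ satisfies this inclusion: the quadratic term contributes $\mu t^{k}\cdot 0 = 0$, and $0\in\partial g(0)$ from the previous step covers the subgradient term. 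Uniqueness then forces $\lambda^{k+1}=0=\lambda^{\ast}$, completing the proof.

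There is no real obstacle here; the argument is essentially the observation that the high-order regularizer $\frac{\mu}{1+1/p}\|\lambda\|^{1+1/p}$ and the surrogate quadratic $\frac{\mu}{2}t^{k}\|\lambda\|^{2}$ both vanish (along with their subgradients) at the origin, so that the optimality conditions for the master problem and for the subproblem coincide whenever $\lambda^{\ast}=0$. The only point worth flagging is that $t^{k}>0$ must be guaranteed to invoke strong convexity for uniqueness, which is precisely why the hypothesis $\|\lambda^{k}\|>0$ is included in the statement.
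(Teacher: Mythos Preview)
Your proof is correct, but it takes a different route from the paper's. The paper argues via two variational inequalities: plugging $\lambda=\lambda^{k+1}$ into the optimality condition for \eqref{equ23} at $\lambda^{\ast}=0$, plugging $\lambda=\lambda^{\ast}$ into the optimality condition for the subproblem, adding the two, and using monotonicity of $\partial g$ to obtain $-\mu\,\|\lambda^{k+1}\|^{2}/\|\lambda^{k}\|^{1-1/p}\ge 0$, which forces $\lambda^{k+1}=0$. You instead extract $0\in\partial g(0)$ from the master optimality condition and then verify directly that $\lambda=0$ satisfies the first-order condition of the strongly convex subproblem, concluding by uniqueness. Your argument is shorter and more transparent for this degenerate case; the paper's variational-inequality-plus-monotonicity template has the advantage of being exactly the same machinery it reuses in the nondegenerate case (\cref{newlemma4}), so the two proofs read as parallel instances of one technique.
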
  
\begin{proof}
  Since $\lambda^{\ast }=0$, the optimal condition for \cref{equ23} can be simplified as 
  \begin{equation}\label{equ25}
    (\lambda-\lambda^{\ast})^{T}h^{\ast} \geq 0, \quad \forall \lambda \in \mathbb{R}^{n},
 \end{equation} 
 where $h^{\ast} \in \partial g(\lambda^{\ast})$. Setting $\lambda$ in \cref{equ25} as $\lambda^{k+1}$, we have
 \begin{equation}\label{equ26}
  (\lambda^{k+1}-\lambda^{\ast })^{T}h^{\ast} \geq 0.
\end{equation} 
 Using the optimality condition for the step 8 of \cref{algorithm2} yields
 \begin{equation}\label{equ27}
  (\lambda-\lambda^{k+1 })^{T}\left\{h^{k+1}+\mu \frac{\lambda^{k+1}}{\left\lVert \lambda^{k}\right\rVert^{1-\frac{1}{p} } }  \right\} \geq 0, \quad \forall y \in \mathcal{Y}.
 \end{equation} 
  where $h^{k+1} \in \partial g(\lambda^{k+1})$. Setting $\lambda^{\ast }$ in \cref{equ27} as $\lambda$, we have
 \begin{equation}\label{equ28}
  (\lambda^{\ast }-\lambda^{k+1 })^{T}\left\{h^{k+1}+\mu \frac{\lambda^{k+1}}{\left\lVert \lambda^{k}\right\rVert^{1-\frac{1}{p} } }  \right\} \geq 0.
 \end{equation} 
 Considering that $\partial g$ is monotone, thus ,combining \cref{equ26} and \cref{equ28}, we get
 \begin{equation}\label{equ29}
  (-\lambda^{k+1 })^{T}\frac{\lambda^{k+1}}{\left\lVert \lambda^{k}\right\rVert^{1-\frac{1}{p} } } \geq 0.
 \end{equation} 
 \cref{equ29} implies that $\lambda^{k+1 }=\lambda^{\ast }=0$.
\end{proof}

\begin{remark}
  In the case where $\lambda^{\ast}=0$, the \cref{algorithm2} with the initial point $\lambda^{0} \neq 0$ only takes one iteration to obtain $\lambda^{\ast}$.
\end{remark}

Case 2: $\lambda^{\ast} \neq 0$. The following lemmas are   essential tools for establishing the convergence rate of \cref{algorithm2}.
\begin{lemma} \label{newlemma4}
  Suppose $\lambda^{\ast} \neq 0$ and $ \left\lVert \lambda^{k}\right\rVert >0$. Then, $\lambda^{k+1}$ generated by \cref{algorithm2} satisfies  that
  \begin{equation}\label{equ30b}
    (\lambda^{k+1}-\lambda^{\ast})^{T}\left\{\left\lVert \lambda^{k}\right\rVert^{1-\frac{1}{p} } \lambda^{\ast}-\left\lVert \lambda^{\ast}\right\rVert^{1-\frac{1}{p} }\lambda^{k+1} \right\}  \geq 0,
   \end{equation} 
  \begin{equation}\label{equ30}
    (\left\lVert \lambda^{k}\right\rVert^{1-\frac{1}{p} } -\left\lVert \lambda^{\ast}\right\rVert^{1-\frac{1}{p} })(\lambda^{k+1}-\lambda^{\ast})^{T}\lambda^{k+1} \geq \left\lVert \lambda^{k}\right\rVert ^{1-\frac{1}{p} }\left\lVert \lambda^{k+1}-\lambda^{\ast}\right\rVert ^{2}.
   \end{equation} 
\end{lemma}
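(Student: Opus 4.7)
The plan is to use the variational inequality characterization of both $\lambda^{\ast}$ and $\lambda^{k+1}$, add them after swapping test points, invoke monotonicity of $\partial g$, and then massage the resulting inequality to obtain both \cref{equ30b} and \cref{equ30}.

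First, I would write down the two optimality conditions explicitly. For $\lambda^{\ast}$ in problem \cref{equ23} (which is already recorded in \cref{equ24}), setting the test point to $\lambda = \lambda^{k+1}$ yields
\begin{equation*}
(\lambda^{k+1}-\lambda^{\ast})^{T}\Bigl\{h^{\ast}+\mu\,\tfrac{\lambda^{\ast}}{\|\lambda^{\ast}\|^{1-1/p}}\Bigr\}\ge 0,\qquad h^{\ast}\in\partial g(\lambda^{\ast}).
\end{equation*}
For $\lambda^{k+1}$, the optimality condition of the strongly convex subproblem in step 8 of \cref{algorithm2} (with $t_k=\|\lambda^k\|^{1/p-1}$) gives, after setting the test point to $\lambda=\lambda^{\ast}$,
\begin{equation*}
(\lambda^{\ast}-\lambda^{k+1})^{T}\Bigl\{h^{k+1}+\mu\,\tfrac{\lambda^{k+1}}{\|\lambda^{k}\|^{1-1/p}}\Bigr\}\ge 0,\qquad h^{k+1}\in\partial g(\lambda^{k+1}).
\end{equation*}

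Next I would add these two inequalities. The combined subgradient term is $(\lambda^{k+1}-\lambda^{\ast})^{T}(h^{\ast}-h^{k+1})$, which is nonpositive by monotonicity of $\partial g$. Dropping it (transferring to the other side) leaves
\begin{equation*}
\mu\,(\lambda^{k+1}-\lambda^{\ast})^{T}\Bigl\{\tfrac{\lambda^{\ast}}{\|\lambda^{\ast}\|^{1-1/p}}-\tfrac{\lambda^{k+1}}{\|\lambda^{k}\|^{1-1/p}}\Bigr\}\ge 0.
\end{equation*}
Multiplying through by the positive scalar $\|\lambda^{k}\|^{1-1/p}\|\lambda^{\ast}\|^{1-1/p}/\mu$ produces \cref{equ30b} immediately; this uses only the hypotheses $\lambda^{\ast}\ne 0$ and $\|\lambda^{k}\|>0$ to keep the scaling nonzero.

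Finally, \cref{equ30} is a purely algebraic consequence of \cref{equ30b}. Expanding the left-hand side of \cref{equ30b} and rearranging,
\begin{equation*}
\|\lambda^{k}\|^{1-1/p}(\lambda^{k+1}-\lambda^{\ast})^{T}\lambda^{\ast}\;\ge\;\|\lambda^{\ast}\|^{1-1/p}(\lambda^{k+1}-\lambda^{\ast})^{T}\lambda^{k+1}.
\end{equation*}
I would then add $-\|\lambda^{k}\|^{1-1/p}(\lambda^{k+1}-\lambda^{\ast})^{T}\lambda^{k+1}$ to both sides; the right becomes $(\|\lambda^{\ast}\|^{1-1/p}-\|\lambda^{k}\|^{1-1/p})(\lambda^{k+1}-\lambda^{\ast})^{T}\lambda^{k+1}$ and the left becomes $-\|\lambda^{k}\|^{1-1/p}\|\lambda^{k+1}-\lambda^{\ast}\|^{2}$. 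Multiplying by $-1$ (which flips the inequality) delivers \cref{equ30}.

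There is no serious obstacle here; the only bit that requires a moment of care is making sure the correct test points are plugged into each variational inequality before adding, and keeping track that the scaling factor $\|\lambda^{k}\|^{1-1/p}\|\lambda^{\ast}\|^{1-1/p}$ is strictly positive under the stated hypotheses so that the direction of the inequality is preserved. The bookkeeping for the passage from \cref{equ30b} to \cref{equ30} is a one-line regrouping once the inner product $(\lambda^{k+1}-\lambda^{\ast})^{T}\lambda^{\ast}$ is rewritten as $(\lambda^{k+1}-\lambda^{\ast})^{T}\lambda^{k+1}-\|\lambda^{k+1}-\lambda^{\ast}\|^{2}$.
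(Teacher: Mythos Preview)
Your proposal is correct and follows essentially the same route as the paper: write the variational inequalities for $\lambda^{\ast}$ and for the subproblem defining $\lambda^{k+1}$, swap test points, add, use monotonicity of $\partial g$ to drop the subgradient cross term and obtain \cref{equ30b}, and then pass to \cref{equ30} by the algebraic regrouping that amounts to adding $\|\lambda^{k}\|^{1-1/p}\|\lambda^{k+1}-\lambda^{\ast}\|^{2}$ to both sides.
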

\begin{proof}
  Since $\lambda^{\ast }\neq0$, the optimal condition for \cref{equ23} is 
  \begin{equation}\label{equ31}
    (\lambda-\lambda^{\ast})^{T}\left\{h^{\ast}+\mu\frac{\lambda^{\ast}}{\left\lVert \lambda^{\ast}\right\rVert^{1-\frac{1}{p} } } \right\}  \geq 0, \quad \forall \lambda \in \mathbb{R}^{n},
 \end{equation} 
 where $h^{\ast} \in \partial g(\lambda^{\ast})$. Setting $\lambda$ in \cref{equ31} as $\lambda^{k+1}$, we have
 \begin{equation}\label{equ32}
  (\lambda^{k+1}-\lambda^{\ast })^{T}\left\{h^{\ast}+\mu\frac{\lambda^{\ast}}{\left\lVert \lambda^{\ast}\right\rVert^{1-\frac{1}{p} } } \right\} \geq 0.
\end{equation} 
 Using the optimality condition for the step 8 of \cref{algorithm2} yields
 \begin{equation}\label{equ33}
  (\lambda-\lambda^{k+1 })^{T}\left\{h^{k+1}+\mu \frac{\lambda^{k+1}}{\left\lVert \lambda^{k}\right\rVert^{1-\frac{1}{p} } }  \right\} \geq 0, \quad \forall y \in \mathcal{Y}.
 \end{equation} 
  where $h^{k+1} \in \partial g(\lambda^{k+1})$. Setting $\lambda^{\ast }$ in \cref{equ33} as $\lambda$, we have
 \begin{equation}\label{equ34}
  (\lambda^{\ast }-\lambda^{k+1 })^{T}\left\{h^{k+1}+\mu \frac{\lambda^{k+1}}{\left\lVert \lambda^{k}\right\rVert^{1-\frac{1}{p} } }  \right\} \geq 0.
 \end{equation} 
 Considering that $\partial g$ is monotone, thus ,combining \cref{equ32} and \cref{equ34}, we get
 \begin{equation}\label{equ35}
  (\lambda^{k+1}-\lambda^{\ast})^{T}\left\{\left\lVert \lambda^{k}\right\rVert^{1-\frac{1}{p} } \lambda^{\ast}-\left\lVert \lambda^{\ast}\right\rVert^{1-\frac{1}{p} }\lambda^{k+1} \right\}  \geq 0.
 \end{equation} 
 Adding the following term
 \begin{equation}
  \left\lVert \lambda^{k}\right\rVert^{1-\frac{1}{p} }  \left\lVert \lambda^{k+1}-\lambda^{\ast }\right\rVert ^{2}\nonumber
\end{equation} 
to the both sides of \cref{equ35} yields the inequality \cref{equ30}.
\end{proof}
\begin{lemma} \label{newlemma5}
  Suppose $\lambda^{\ast} \neq 0$ and $ \left\lVert \lambda^{k}\right\rVert >0$. Then, $\lambda^{k+1}$ generated by \cref{algorithm2} satisfies  that
  \begin{equation}\label{equ36}
    (\left\lVert \lambda^{k}\right\rVert^{1-\frac{1}{p} }-\left\lVert \lambda^{\ast}\right\rVert^{1-\frac{1}{p} })(\left\lVert \lambda^{k+1}\right\rVert^{2 }-\left\lVert \lambda^{\ast}\right\rVert^{2  }) \geq 0,
   \end{equation} 
   \begin{equation}\label{equ37}
    (\left\lVert \lambda^{k}\right\rVert^{1-\frac{1}{p} }-\left\lVert \lambda^{\ast}\right\rVert^{1-\frac{1}{p} })(\left\lVert \lambda^{k}\right\rVert^{1-\frac{1}{p} }-\left\lVert \lambda^{k+1}\right\rVert^{1-\frac{1}{p}  }) \geq 0.
  \end{equation} 
\end{lemma}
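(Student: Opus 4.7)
The plan is to reduce both vector inequalities to scalar relations among the norms $\lVert\lambda^k\rVert$, $\lVert\lambda^{k+1}\rVert$, $\lVert\lambda^*\rVert$ by combining the first conclusion of \cref{newlemma4} with Cauchy--Schwarz. Write $a=\lVert\lambda^k\rVert^{1-1/p}$, $b=\lVert\lambda^*\rVert^{1-1/p}$, $u=\lambda^{k+1}$, $v=\lambda^*$; the standing hypotheses $\lambda^*\neq 0$ and $\lVert\lambda^k\rVert>0$ make all of $a$, $b$, $\lVert v\rVert$ positive, and \cref{equ30b} reads $(u-v)^T(av-bu)\ge 0$, equivalently $(a+b)\,u^Tv\ge a\lVert v\rVert^2+b\lVert u\rVert^2$. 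The first step is to apply $u^Tv\le\lVert u\rVert\lVert v\rVert$ and factor, which produces the master scalar inequality
\begin{equation*}
  (\lVert u\rVert-\lVert v\rVert)(a\lVert v\rVert-b\lVert u\rVert)\ge 0, \qquad (\star)
\end{equation*}
from which both conclusions will follow.

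For \cref{equ36}, I would rewrite the second factor of $(\star)$ by inserting $\pm b\lVert v\rVert$, so that $a\lVert v\rVert-b\lVert u\rVert=(a-b)\lVert v\rVert-b(\lVert u\rVert-\lVert v\rVert)$; substituting into $(\star)$ yields $(a-b)\lVert v\rVert(\lVert u\rVert-\lVert v\rVert)\ge b(\lVert u\rVert-\lVert v\rVert)^2\ge 0$. Dividing through by $\lVert v\rVert>0$ and then multiplying by the nonnegative factor $\lVert u\rVert+\lVert v\rVert$ recovers \cref{equ36}.

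For \cref{equ37}, I would set $r=\lVert u\rVert/\lVert v\rVert$ and $s=a/b$, so that $(\star)$ simplifies to $(r-1)(s-r)\ge 0$. Since $\lVert u\rVert^{1-1/p}=b\,r^{1-1/p}$, the target \cref{equ37}, divided by $b^2>0$, is exactly $(s-1)(s-r^{1-1/p})\ge 0$. The key observation is that with $1-1/p\in[0,1]$ the quantity $r^{1-1/p}$ always sits between $1$ and $r$; a short split into $r\ge 1$ (where $s\ge r\ge r^{1-1/p}\ge 1$) and $r\le 1$ (where $s\le r\le r^{1-1/p}\le 1$) shows that $s-1$ and $s-r^{1-1/p}$ carry the same sign, closing the proof. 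The main obstacle is the factorisation producing $(\star)$: once Cauchy--Schwarz is applied with the correct grouping the remaining work is short bookkeeping, with the only subtlety in \cref{equ37} being the elementary power-monotonicity observation placing $r^{1-1/p}$ between $\min(1,r)$ and $\max(1,r)$.
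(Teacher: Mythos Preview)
Your argument is correct and in fact more streamlined than the paper's. Both proofs rest on \cref{newlemma4}, but you use only the first conclusion \cref{equ30b}, pass immediately to scalars via Cauchy--Schwarz, and obtain the single master inequality $(\star)$ from which both \cref{equ36} and \cref{equ37} drop out by elementary manipulations on $r=\lVert\lambda^{k+1}\rVert/\lVert\lambda^*\rVert$ and $s=\lVert\lambda^k\rVert^{1-1/p}/\lVert\lambda^*\rVert^{1-1/p}$. The paper instead keeps the argument at the vector level longer: for \cref{equ36} it uses the second conclusion \cref{equ30} of \cref{newlemma4} to extract the pair of inequalities $(a-b)(\lambda^{k+1}-\lambda^*)^T\lambda^{k+1}\ge 0$ and $(a-b)(\lambda^{k+1}-\lambda^*)^T\lambda^*\ge 0$ and adds them; for \cref{equ37} it combines \cref{equ30b} with the monotonicity of $\nabla\bigl(\tfrac{1}{1+1/p}\lVert\cdot\rVert^{1+1/p}\bigr)$ to obtain $(a-c)(\lambda^{k+1}-\lambda^*)^T\lambda^*\ge 0$ (where $c=\lVert\lambda^{k+1}\rVert^{1-1/p}$), multiplies this against the second displayed inequality above, and handles the degenerate case $(\lambda^{k+1}-\lambda^*)^T\lambda^*=0$ separately by showing it forces $\lambda^{k+1}=\lambda^*$. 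Your route avoids invoking the convexity of $\lVert\cdot\rVert^{1+1/p}$ altogether and replaces the paper's degenerate-case check with the automatic observation that $(\star)$ already rules out $r=0$ (since $r=0$ would force $s\le 0$); the paper's route, on the other hand, stays closer to the variational structure and makes the role of each optimality condition more transparent. Two small polish points: the boundary case $r=1$ in your split deserves a one-line mention (the target reduces to $(s-1)^2\ge 0$), and the exponent range is $1-1/p\in[0,1)$ rather than $[0,1]$, though this does not affect the monotonicity argument.
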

\begin{proof}
  Firstly, we prove the inequality \cref{equ36}. According to \cref{newlemma4}, we have
  \begin{equation}\label{equ38}
    \begin{aligned}
      (\left\lVert \lambda^{k}\right\rVert ^{1-\frac{1}{p} }-\left\lVert \lambda^{\ast}\right\rVert ^{1-\frac{1}{p} })(\lambda^{k+1}-\lambda^{\ast})^{T}\lambda^{k+1}&\geq \left\lVert \lambda^{k}\right\rVert^{1-\frac{1}{p} }  \left\lVert \lambda^{k+1}-\lambda^{\ast}\right\rVert ^{2} \\
      &\geq     (\left\lVert \lambda^{k}\right\rVert ^{1-\frac{1}{p} }-\left\lVert \lambda^{\ast}\right\rVert ^{1-\frac{1}{p} })\left\lVert \lambda^{k+1}-\lambda^{\ast}\right\rVert ^{2}.
    \end{aligned}
  \end{equation} 
  \cref{equ38} implies that 
  \begin{equation}\label{equ39}
    \left\{ {\begin{aligned}
      (\left\lVert \lambda^{k}\right\rVert ^{1-\frac{1}{p} }-\left\lVert \lambda^{\ast}\right\rVert ^{1-\frac{1}{p} })(\lambda^{k+1}-\lambda^{\ast})^{T}\lambda^{k+1}&\geq0\\
      (\left\lVert \lambda^{k}\right\rVert ^{1-\frac{1}{p} }-\left\lVert \lambda^{\ast}\right\rVert ^{1-\frac{1}{p} })(\lambda^{k+1}-\lambda^{\ast})^{T}\lambda^{\ast}&\geq0
      \end{aligned}}. \right.
  \end{equation}
  The assertion \cref{equ36} follows directly from \cref{equ39}.

  Secondly, we prove the inequality \cref{equ37}. It is easy to show that $\lambda^{k+1} \neq 0$. Otherwise, according to \cref{newlemma4}, $\left\lVert \lambda^{k}\right\rVert ^{1-\frac{1}{p} }\left\lVert \lambda^{\ast}\right\rVert ^{2} \leq 0$, which is contradict with the assumption in \cref{newlemma5}. Using the convexity of the function $\left\lVert \cdot \right\rVert ^{1+\frac{1}{p} }$, we have
  \begin{equation}\label{equ40}
    (\lambda^{k+1}-\lambda^{\ast })^{T}\{ \frac{\lambda^{k+1}}{\left\lVert \lambda^{k+1}\right\rVert ^{1-\frac{1}{p} }} -\frac{\lambda^{\ast}}{\left\lVert \lambda^{\ast}\right\rVert ^{1-\frac{1}{p} }}\} \geq 0.
  \end{equation} 
  Combining \cref{equ40} and \cref{equ30b}, it holds that
  \begin{equation}\label{equ41}
    (\left\lVert \lambda^{k}\right\rVert^{1-\frac{1}{p} }-\left\lVert \lambda^{k+1}\right\rVert^{1-\frac{1}{p} })(\lambda^{k+1}-\lambda^{\ast})^{T} \lambda^{\ast}\geq 0.
 \end{equation} 
 Moreover, the inequality
 \begin{equation}\label{equ42}
  (\left\lVert \lambda^{k}\right\rVert^{1-\frac{1}{p} }-\left\lVert \lambda^{\ast}\right\rVert^{1-\frac{1}{p} })(\left\lVert \lambda^{k}\right\rVert^{1-\frac{1}{p} }-\left\lVert \lambda^{k+1}\right\rVert^{1-\frac{1}{p}  }) \left[(\lambda^{k+1}-\lambda^{\ast})^{T} \lambda^{\ast}\right]^{2}\geq 0
\end{equation} 
follows  from \cref{equ39} and \cref{equ41}. Thus, it remains to prove that the assertion \cref{equ37} holds when $(\lambda^{k+1}-\lambda^{\ast})^{T} \lambda^{\ast} = 0$. According to \cref{newlemma4}, $\lambda^{k+1}=\lambda^{\ast}$ when $(\lambda^{k+1}-\lambda^{\ast})^{T} \lambda^{\ast} = 0$, and inequality \cref{equ37} obviously holds.
\end{proof}

\begin{remark}
  From \cref{newlemma5}, when the initial point $\lambda^{0}$ satisfies that $\left\lVert \lambda^{0}\right\rVert  \geq \left\lVert \lambda^{\ast}\right\rVert$, then for $k \geq 0$, we have $\left\lVert \lambda^{k}\right\rVert \geq \left\lVert \lambda^{k+1}\right\rVert$ and $\left\lVert \lambda^{k}\right\rVert \geq \left\lVert \lambda^{\ast}\right\rVert$. Similarly, when the initial point $\lambda^{0}$ satisfies that $\left\lVert \lambda^{0}\right\rVert  \leq \left\lVert \lambda^{\ast}\right\rVert$, then for $k \geq 0$,  $\left\lVert \lambda^{k}\right\rVert \leq \left\lVert \lambda^{k+1}\right\rVert$ and $\left\lVert \lambda^{k}\right\rVert \leq \left\lVert \lambda^{\ast}\right\rVert$. Intuitively, \cref{newlemma5} tells us that $\left\{\lambda^{k}\right\}_{k\geq 0} $ generated by \cref{algorithm2} can make $\left\lVert \lambda^{k}\right\rVert $ approach to $\left\lVert \lambda^{\ast}\right\rVert $.
\end{remark}

Now, we analyze the convergence rate of \cref{algorithm2}. Here, our analysis is under the assumption that $\left\lVert \lambda^{0}\right\rVert  \geq \left\lVert \lambda^{\ast}\right\rVert$, and the proof of the case $\left\lVert \lambda^{0}\right\rVert  \leq \left\lVert \lambda^{\ast}\right\rVert$ can be seen in \cref{appendix}.
\begin{theorem}\label{newTheorem5}
   Let the sequence $\left\{\lambda^{k} \right\}_{1\leq k \leq  N} $ be generated by \cref{algorithm2} with the initial point $\lambda^{0}$. Suppose that $\left\lVert \lambda^{0}\right\rVert  \geq \left\lVert \lambda^{\ast}\right\rVert$, and $\lambda^{k} \neq \lambda^{\ast}, k = 1,2,...,N$. Then, we have 
  \begin{equation}\label{equ43}
     \left\lVert \lambda^{N}-\lambda^{\ast}\right\rVert \leq \left\lVert \lambda^{\ast}\right\rVert (e^{(1-\frac{1}{p} )^{N-1}\ln \frac{\left\lVert \lambda^{0}\right\rVert}{\left\lVert \lambda^{\ast}\right\rVert} }-1)\leq (\left\lVert \lambda^{0}\right\rVert -\left\lVert \lambda^{\ast}\right\rVert)(1-\frac{1}{p} )^{N-1}.
 \end{equation} 
\end{theorem}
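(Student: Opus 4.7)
The plan is to extract a one-step multiplicative contraction on the ratio $\beta_k:=\|\lambda^k\|/\|\lambda^*\|$ from inequality~\cref{equ30} of \cref{newlemma4}, iterate it $N-1$ times, and then apply one more step together with the convexity of $u\mapsto e^{ut}$ to obtain both inequalities in~\cref{equ43}.

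Set $a_k=\|\lambda^k\|^{1-1/p}$, $a^*=\|\lambda^*\|^{1-1/p}$, and $\gamma=1-1/p\in[0,1)$. Under the hypothesis $\|\lambda^0\|\geq\|\lambda^*\|$, the remark following \cref{newlemma5} guarantees $\|\lambda^k\|\geq\|\lambda^{k+1}\|\geq\|\lambda^*\|>0$ for every $k$, and in particular $a_k\geq a^*>0$. Apply Cauchy--Schwarz to the inner product on the left-hand side of~\cref{equ30} and divide by the strictly positive quantity $\|\lambda^{k+1}-\lambda^*\|$ (nonzero by the hypothesis $\lambda^{k+1}\neq\lambda^*$) to obtain
\[
\|\lambda^{k+1}-\lambda^*\|\;\leq\;\frac{a_k-a^*}{a_k}\,\|\lambda^{k+1}\|.
\]
Now combine this with the triangle inequality $\|\lambda^{k+1}\|\leq\|\lambda^{k+1}-\lambda^*\|+\|\lambda^*\|$ and solve for $\|\lambda^{k+1}\|$; the result is $\|\lambda^{k+1}\|\leq(a_k/a^*)\|\lambda^*\|$, equivalently $\beta_{k+1}\leq\beta_k^{\gamma}$. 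Substituting back gives the companion bound
\[
\|\lambda^{k+1}-\lambda^*\|\;\leq\;\|\lambda^*\|\bigl(\beta_k^{\gamma}-1\bigr).
\]

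Taking logarithms in $\beta_{k+1}\leq\beta_k^{\gamma}$ and iterating $N-1$ times yields $\beta_{N-1}\leq\beta_0^{\gamma^{N-1}}$. Apply the companion bound at $k=N-1$ and use $\beta_{N-1}^{\gamma}\leq\beta_{N-1}$ (valid because $\beta_{N-1}\geq 1$ and $\gamma\leq 1$) to deduce the first inequality of~\cref{equ43}:
\[
\|\lambda^{N}-\lambda^*\|\;\leq\;\|\lambda^*\|\bigl(\beta_{N-1}^{\gamma}-1\bigr)\;\leq\;\|\lambda^*\|\bigl(e^{\gamma^{N-1}\ln\beta_0}-1\bigr).
\]
The second inequality is a one-line consequence of the convexity of $u\mapsto e^{ut}$ with $t:=\ln\beta_0\geq 0$: the bound $e^{ut}\leq(1-u)+ue^{t}$ for $u\in[0,1]$, applied at $u=\gamma^{N-1}$, gives $\|\lambda^*\|(e^{\gamma^{N-1}\ln\beta_0}-1)\leq\gamma^{N-1}\|\lambda^*\|(\beta_0-1)=\gamma^{N-1}(\|\lambda^0\|-\|\lambda^*\|)$.

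I expect the delicate step to be the very first one: \cref{equ30} combined with Cauchy--Schwarz only controls $\|\lambda^{k+1}-\lambda^*\|$ by a multiple of $\|\lambda^{k+1}\|$, and the triangle inequality must be used in the reverse direction to back out an explicit upper bound on $\|\lambda^{k+1}\|$ itself. Once the contraction $\beta_{k+1}\leq\beta_k^{\gamma}$ is in hand, iterating in log-space and invoking convexity of $e^{ut}$ are routine.
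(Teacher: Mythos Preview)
Your proof is correct and follows essentially the same route as the paper: both apply Cauchy--Schwarz to~\cref{equ30}, combine it with the triangle inequality to extract the multiplicative contraction $\|\lambda^{k+1}\|\leq\|\lambda^k\|^{1-1/p}\|\lambda^*\|^{1/p}$ (your $\beta_{k+1}\leq\beta_k^{\gamma}$), iterate in log-space, and finish with the convexity of $u\mapsto e^{ut}$ on $[0,1]$. Your presentation is a touch cleaner---naming $\beta_k$ and $\gamma$, and making the convexity step explicit where the paper leaves it tacit---but there is no substantive difference in strategy.
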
 
\begin{proof}
  According to
  \cref{newlemma5}, for any $k\geq 0 $, $\left\lVert \lambda^{k}\right\rVert \geq \left\lVert \lambda^{\ast}\right\rVert$. Thus, we have
  \begin{multline} \label{equ44}
    M^{k} = (\left\lVert \lambda^{k}\right\rVert ^{1-\frac{1}{p} }-\left\lVert \lambda^{\ast}\right\rVert ^{1-\frac{1}{p} })(\lambda^{k+1}-\lambda^{\ast})^{T}\lambda^{k+1}\\ \leq (\left\lVert \lambda^{k}\right\rVert ^{1-\frac{1}{p} }-\left\lVert \lambda^{\ast}\right\rVert ^{1-\frac{1}{p} })\left\lVert \lambda^{k+1}-\lambda^{\ast}\right\rVert \left\lVert \lambda^{k+1}\right\rVert.
  \end{multline}
  According to  \cref{newlemma4},  it holds that 
  \begin{equation}\label{equ45}
    \begin{aligned}
      \left\lVert \lambda^{k}\right\rVert^{1-\frac{1}{p} }  \left\lVert \lambda^{k+1}-\lambda^{\ast}\right\rVert ^{2} &\leq  M^{k}\\
      &\leq  (\left\lVert \lambda^{k}\right\rVert ^{1-\frac{1}{p} }-\left\lVert \lambda^{\ast}\right\rVert ^{1-\frac{1}{p} })\left\lVert \lambda^{k+1}-\lambda^{\ast}\right\rVert \left\lVert \lambda^{k+1}\right\rVert.
    \end{aligned}
  \end{equation} 
  Note that $ \left\lVert \lambda^{k+1}-\lambda^{\ast}\right\rVert \geq \left\lVert \lambda^{k+1}\right\rVert-\left\lVert \lambda^{\ast}\right\rVert$, and combining with \cref{equ45}, we have
  \begin{equation}\label{equ46}
    \left\lVert \lambda^{k}\right\rVert ^{1-\frac{1}{p} }(\left\lVert \lambda^{k+1}\right\rVert -\left\lVert \lambda^{\ast}\right\rVert) \leq \left\lVert \lambda^{k+1}\right\rVert (\left\lVert \lambda^{k}\right\rVert^{1-\frac{1}{p} } -\left\lVert \lambda^{\ast}\right\rVert^{1-\frac{1}{p} }).
\end{equation} 
  By a simple manipulation, we obtain
  \begin{equation}\label{equ47}
     \left\lVert \lambda^{k+1}\right\rVert \leq \left\lVert \lambda^{k}\right\rVert^{1-\frac{1}{p} }\left\lVert \lambda^{\ast}\right\rVert^{\frac{1}{p} }.
  \end{equation}  
  Then, 
  \begin{equation}\label{equ48}
    \begin{aligned}
      \ln \frac{\left\lVert \lambda^{k}\right\rVert }{\left\lVert \lambda^{\ast}\right\rVert}  &\leq  (1-\frac{1}{p} )\ln \frac{\left\lVert \lambda^{k-1}\right\rVert }{\left\lVert \lambda^{\ast}\right\rVert}\\
      &\leq  (1-\frac{1}{p} )^{k}\ln \frac{\left\lVert \lambda^{0}\right\rVert }{\left\lVert \lambda^{\ast}\right\rVert}, \quad k = 1,2,...N.
    \end{aligned}
  \end{equation} 
  Note that $\left\lVert \lambda^{k+1}\right\rVert \leq \left\lVert \lambda^{k}\right\rVert$, according to the inequality \cref{equ45}, we have
  \begin{equation}\label{equ49}
    \begin{aligned}
    \left\lVert \lambda^{N}-\lambda^{\ast}\right\rVert   &\leq \left\lVert \lambda^{N-1}\right\rVert ^{\frac{1}{p} } (\left\lVert \lambda^{N-1}\right\rVert^{1-\frac{1}{p} }-\left\lVert \lambda^{\ast}\right\rVert^{1-\frac{1}{p} })\\
      &=  \left\lVert \lambda^{N-1}\right\rVert -\left\lVert \lambda^{N-1}\right\rVert^{\frac{1}{p} }\left\lVert \lambda^{\ast}\right\rVert^{1-\frac{1}{p}}\\
      & \leq \left\lVert \lambda^{\ast}\right\rVert (\frac{\left\lVert \lambda^{N-1}\right\rVert}{\left\lVert \lambda^{\ast}\right\rVert}-1 )\\
      & \leq \left\lVert \lambda^{\ast}\right\rVert (e^{(1-\frac{1}{p} )^{N-1}\ln \frac{\left\lVert \lambda^{0}\right\rVert}{\left\lVert \lambda^{\ast}\right\rVert} }-1)\\
      & \leq (\left\lVert \lambda^{0}\right\rVert -\left\lVert \lambda^{\ast}\right\rVert)(1-\frac{1}{p} )^{N-1}.
    \end{aligned}
  \end{equation} 
\end{proof}

Now, we apply the \cref{algorithm2} to the dual problem \cref{equ20}. The update for $\lambda^{k+1}$  is 
\begin{equation}\label{equ50}
  \lambda ^{k+1} = \arg  \min \{f_1^{\ast}(\lambda)-\lambda^{T}c+ \frac{\sigma^{-\frac{1}{p} } }{2 }t_k\left\lVert \lambda\right\rVert ^{2 } \vert \; \lambda\in\mathbb{R}^{n}   \}.
\end{equation}
Using the analysis results in \cref{sec:dual}, $\lambda^{k+1}$ can be expressed as 
\begin{equation}\label{equ51}
  \lambda^{k+1}=\frac{c-\mathsf{P}\mathrm{rox}_{\sigma^{-\frac{1}{p}} t^{k}f}(c)}{\sigma^{-\frac{1}{p}} t^{k}} .
\end{equation}
The detail procedure for solving the regularization problem \cref{equ1} is presented in \cref{algorithm3}.
\begin{algorithm} 
  \caption{Fixed point iteration for the regularization problem \cref{equ1}}
  \label{algorithm3}
  \begin{algorithmic}[1]
  \STATE{Require: $\lambda^{0} \in \mathbb{R}^{n}, \; K$}
  \WHILE{$k = 0,1,2,...,K$}
     \IF{$\lambda^{k} \neq 0$}
        \STATE $t^{k} = \left\lVert \lambda^{k}\right\rVert ^{\frac{1}{p} -1}$ ; 
     \ELSE
       \STATE $t^{k} =0$;
     \ENDIF 
     \STATE $\sigma^{k} = \sigma^{-\frac{1}{p}} t^{k}$
     \STATE $\lambda^{k+1}=\frac{c-\mathsf{P}\mathrm{rox}_{\sigma^{k}f}(c)}{\sigma^{k}} $
     \STATE $x^{k+1}=c-\sigma^{-\frac{1}{p} }i_p(\lambda^{k+1}) $
  \ENDWHILE
  \RETURN $x^{K}$
  \end{algorithmic}
\end{algorithm}

\section{Special case p=2}
\label{sec:special}
In this section, we consider the regularization problem \cref{equ1} with $p=2$, and we will see that it can be transformed into solving an one-dimensional monotonic continuity equation. Denote by $x^{\ast }$ the optimal solution of \cref{equ1} with $p=2$, and it satisfies the optimal condition
\begin{equation}\label{eq5}
  0 \in  \partial  f(x^{\ast})+\sigma\left\lVert x^{\ast}-c\right\rVert(x^{\ast}-c) ,
\end{equation}
where $\partial  f$ denotes the subgradient of $f$. If $0 \notin \partial  f(c)$, the condition \cref{eq5} can be equivalent to the following condition
\begin{equation}\label{eq6}
  \left\{ {\begin{aligned}
    x^{\ast} &= \mathsf{P}\mathrm{rox}_{\frac{1}{\sigma t}f}\left(c\right)\\
    t &= \left\lVert x^{\ast}-c\right\rVert 
    \end{aligned}} \right..
\end{equation}
The condition \cref{eq6} can be simplified as an equation about $t$, i.e.
\begin{equation}\label{eq7}
  \frac{1}{\sigma}  = \frac{1}{\sigma t} \left\lVert \mathsf{P}\mathrm{rox}_{\frac{1}{\sigma t}f}\left(c\right)-c\right\rVert.
\end{equation}
Define $T(t) = t\left\lVert \mathsf{P}\mathrm{rox}_{tf}\left(c\right)-c\right\rVert $, then we will prove some significant property of $T(t)$.
\begin{lemma}\label{lemma_1}
  If $0<\lambda _1\leq \lambda _2$, then
  \begin{equation}\label{eq8}
    \frac{\lambda _2}{\lambda _1} T(\lambda _1) \leq T(\lambda _2) \leq (\frac{\lambda _2}{\lambda _1})^{2} T(\lambda _1).
  \end{equation}
\end{lemma}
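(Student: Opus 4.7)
\textbf{Proof plan for \cref{lemma_1}.} Write $x_i := \mathsf{P}\mathrm{rox}_{\lambda_i f}(c)$ and $u_i := c - x_i$ for $i \in \{1,2\}$, so that $T(\lambda_i) = \lambda_i \|u_i\|$. Dividing through by $\lambda_1\lambda_2$ and $\|u_1\|$, the sought double inequality \cref{eq8} is equivalent to the two monotonicity statements
\begin{equation*}
  \|u_1\| \;\leq\; \|u_2\| \qquad \text{and} \qquad \frac{\|u_2\|}{\lambda_2} \;\leq\; \frac{\|u_1\|}{\lambda_1}.
\end{equation*}
The first gives the lower bound $T(\lambda_2)\geq(\lambda_2/\lambda_1)T(\lambda_1)$, while the second gives the upper bound $T(\lambda_2)\leq(\lambda_2/\lambda_1)^{2}T(\lambda_1)$. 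My plan is therefore to establish these two monotonicity properties in turn.

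For the first property, I would exploit the optimality of each $x_i$ for its own proximal objective $f(\cdot)+\frac{1}{2\lambda_i}\|\cdot-c\|^2$. Testing $x_1$ against $x_2$ in the $\lambda_1$-problem and $x_2$ against $x_1$ in the $\lambda_2$-problem and adding the two inequalities cancels the $f$ values and yields
\begin{equation*}
  \bigl(\tfrac{1}{\lambda_1}-\tfrac{1}{\lambda_2}\bigr)\bigl(\|u_1\|^{2}-\|u_2\|^{2}\bigr) \;\leq\; 0.
\end{equation*}
Since $\lambda_1 \leq \lambda_2$ the first factor is nonnegative, so $\|u_1\|\leq\|u_2\|$.

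For the second property, the first-order condition for the proximal operator gives $u_i/\lambda_i \in \partial f(x_i)$. Monotonicity of $\partial f$, together with $x_1 - x_2 = u_2 - u_1$, yields
\begin{equation*}
  (\lambda_1+\lambda_2)\,u_1^{T}u_2 \;\geq\; \lambda_2\|u_1\|^{2} + \lambda_1\|u_2\|^{2}.
\end{equation*}
Bounding the inner product by Cauchy--Schwarz, $u_1^{T}u_2 \leq \|u_1\|\|u_2\|$, and factoring the resulting quadratic in $(\|u_1\|,\|u_2\|)$ produces
\begin{equation*}
  \bigl(\lambda_2\|u_1\|-\lambda_1\|u_2\|\bigr)\bigl(\|u_2\|-\|u_1\|\bigr) \;\geq\; 0.
\end{equation*}
Since the previous step has already shown $\|u_2\|-\|u_1\|\geq 0$, this forces $\lambda_2\|u_1\|\geq\lambda_1\|u_2\|$, which is exactly $\|u_2\|/\lambda_2\leq\|u_1\|/\lambda_1$.

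The only genuine subtlety is the degenerate case $\|u_1\|=0$: then $0\in\partial f(c)$, so $x_2=c$ as well, $\|u_2\|=0$, and both sides of \cref{eq8} vanish. Outside this degenerate case both displacements are strictly positive and every manipulation above — in particular the Cauchy--Schwarz step and the cancellation to extract a sign — is unproblematic. The main conceptual point I expect to stress is that the subgradient-monotonicity argument alone only produces a sign condition on a \emph{product}; it is the companion monotonicity $\|u_1\|\leq\|u_2\|$ that pins down which factor is nonnegative and converts this into the clean ratio bound $T(\lambda_2)\leq(\lambda_2/\lambda_1)^{2}T(\lambda_1)$.
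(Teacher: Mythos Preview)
Your proof is correct and follows essentially the paper's route: subgradient monotonicity gives $(\lambda_1+\lambda_2)\,u_1^{T}u_2 \geq \lambda_2\|u_1\|^{2}+\lambda_1\|u_2\|^{2}$, Cauchy--Schwarz converts this to a quadratic in $(\|u_1\|,\|u_2\|)$ whose factoring yields both monotonicity statements at once. Your separate objective-value argument for $\|u_1\|\leq\|u_2\|$ is correct but redundant---the paper extracts that inequality directly from the same factored product $(\lambda_2\|u_1\|-\lambda_1\|u_2\|)(\|u_2\|-\|u_1\|)\geq 0$ together with $\lambda_1\leq\lambda_2$.
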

\begin{proof}
  Denote $y_1 = \mathsf{P}\mathrm{rox}_{\lambda _1f}\left(c\right)$ and $y_2 = \mathsf{P}\mathrm{rox}_{\lambda _2f}\left(c\right)$. This implies that
  \begin{equation}\label{eq9}
    \left\{ {\begin{aligned}
      \lambda _1^{-1}(c-y_1) &\in \partial f(y_1)\\
      \lambda _2^{-1}(c-y_2) &\in \partial f(y_2) 
      \end{aligned}} \right..
  \end{equation}
  Due to the monotonicity of $\partial f$, we have
  \begin{equation}\label{eq10}
    (y_1-y_2)^{T}(\lambda _1^{-1}(c-y_1)-\lambda _2^{-1}(c-y_2)) \geq 0.
  \end{equation}
  By a simple manipulation, it follows that 
  \begin{equation}\label{eq11}
    (\lambda _1+\lambda _2)(y_1-c)^{T}(y_2-c) \geq \lambda _2\left\lVert y _1-c\right\rVert^2+ \lambda _1\left\lVert y _2-c\right\rVert^2.
  \end{equation}
  Using the inequality $a^{T}b\leq \left\lVert a\right\rVert \left\lVert b\right\rVert $, we have
  \begin{equation}\label{eq12}
    (\lambda _1+\lambda _2)\left\lVert y_1-c\right\rVert \left\lVert y_2-c\right\rVert \geq \lambda _2\left\lVert y _1-c\right\rVert^2+ \lambda _1\left\lVert y _2-c\right\rVert^2.
  \end{equation}
  \cref{eq12} implies that
  \begin{equation}\label{eq13}
    \left\{ {\begin{aligned}
      \left\lVert y_1-c\right\rVert  &\leq \left\lVert y_2-c\right\rVert\\
      \lambda _1\left\lVert y_2-c\right\rVert &\leq\lambda _2 \left\lVert y_1-c\right\rVert 
      \end{aligned}} \right..
  \end{equation}
  Combining with the definition of $T(t)$, we obtain \cref{eq8}.
\end{proof}

\begin{remark}
  \cref{lemma_1} indicates that $T(t)$ is continuous and monotone on the interval $[0,+\infty)$. Obviously, $T(0)=0$ and $\lim_{t \to \infty} T(t) = +\infty$.
  
\end{remark}

Let $t^{'}= \frac{1}{\sigma t} $, \cref{eq7} is equivalent to the following equation
\begin{equation}\label{eq14}
  \frac{1}{\sigma}  = T(t^{'}).
\end{equation}
The detailed procedure of the proposed method is given in \cref{algo1}.
\begin{algorithm} 
  \caption{Solving the  regularization problem \cref{equ1} with $p=2$ via bisection method}
  \label{algo1}
  \begin{algorithmic}[1]
  \STATE{Require: $\tau_0 = 0, \tau_1 = \tau >0, K$}
  \FOR{$k = 0,1,2,\ldots K $}
  \STATE\label{line1}{$t_k=\frac{\tau _0+\tau _1}{2} $}
  \IF{$T(t_k)>\frac{1}{\sigma} $ }
      \STATE{$\tau_1 = t_k$}
  \ELSE
  \STATE{$\tau_0 = t_k$}
  \ENDIF
  \ENDFOR
  \RETURN{$x_{K}=\mathsf{P}\mathrm{rox}_{t_{K} f}\left(c\right)$}
  \end{algorithmic}
\end{algorithm}

Now we prove the linear convergence rate of \cref{algo1}.
\begin{theorem}\label{Theorem1}
  Let the sequence $\left\{t^{k}\right\}_{1\leq k \leq K}$ is generated by \cref{algo1} with the initial value $\tau _1 = \tau $, and $x^{k} = \mathsf{P}\mathrm{rox}_{t^{k} f}\left(c\right)$. If $T(\tau )>0$ and $x^{k} \neq x^{\ast} (k=1,2,...,K)$,  we have
 \begin{equation}\label{eq16}
   \left\lVert x^{k}-x^{\ast}\right\rVert \leq \left\lVert g^{\ast}\right\rVert \frac{\tau }{2^{k}},
 \end{equation}
 where $g^{\ast} \in \partial f(x^{\ast})$.
\end{theorem}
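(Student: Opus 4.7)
My plan is to split the argument into two independent pieces: first, show that the scalar $t^k$ converges geometrically to the unique root $t^*$ of $T(t)=1/\sigma$ at the bisection rate $|t^k-t^*|\le \tau/2^k$; second, transfer this to the iterate error by establishing a Lipschitz-type estimate $\|x^k - x^*\|\le |t^k - t^*|\,\|g^*\|$ via the subgradient characterization of the proximal map. Combining these two bounds yields the claim.

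For the first step, \cref{lemma_1} implies that $T$ is continuous and strictly increasing on $(0,\infty)$ with $T(0)=0$. Under the hypothesis on $\tau$ (which has to imply $T(\tau)\ge 1/\sigma$ in order for the bisection to bracket a root), the equation $T(t)=1/\sigma$ admits a unique solution $t^*\in(0,\tau]$, and the update rule of \cref{algo1} preserves the invariant $T(\tau_0)\le 1/\sigma \le T(\tau_1)$ throughout. A standard bisection argument then gives the geometric bracket-length bound $|t^k - t^*|\le \tau/2^k$.

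For the second step, since $x^k = \mathsf{P}\mathrm{rox}_{t^k f}(c)$ and $x^* = \mathsf{P}\mathrm{rox}_{t^* f}(c)$, the first-order optimality conditions for the proximal operator produce subgradients $g^k\in\partial f(x^k)$ and $g^*\in\partial f(x^*)$ with $c - x^k = t^k g^k$ and $c - x^* = t^* g^*$. Subtracting these identities and taking the inner product against $x^k - x^*$ yields
\[
\|x^k - x^*\|^2 \;=\; -t^k(x^k - x^*)^T(g^k - g^*) \;+\; (t^* - t^k)(x^k - x^*)^T g^*.
\]
The first term on the right is nonpositive because $t^k > 0$ and $\partial f$ is monotone; applying the Cauchy--Schwarz inequality to the second term then gives $\|x^k - x^*\|\le |t^k - t^*|\,\|g^*\|$. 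Combining with the bisection bound closes the proof.

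The main obstacle I anticipate is the bracketing verification: the stated hypothesis $T(\tau) > 0$ by itself is not enough to guarantee that $t^*$ lies in the initial interval $[0,\tau]$, so one must either interpret it or strengthen it to $T(\tau)\ge 1/\sigma$ so the bisection genuinely converges to a root. Past that point, the transfer inequality in the second paragraph is a short monotone-operator computation, and the side condition $x^k\ne x^*$ is not actually used in the argument—it merely excludes the trivial situation in which the inequality is already saturated by zero on the left-hand side.
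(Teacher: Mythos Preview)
Your proposal is correct and follows essentially the same route as the paper: the paper also writes $c-x^{k}=t^{k}g^{k}$ and $c-x^{\ast}=t^{\ast}g^{\ast}$, expands $\lVert x^{k}-x^{\ast}\rVert^{2}$, drops the monotonicity term, applies Cauchy--Schwarz, and then invokes the bisection bound $|t^{k}-t^{\ast}|\le \tau/2^{k}$, using the assumption $x^{k}\neq x^{\ast}$ precisely to cancel one factor of $\lVert x^{k}-x^{\ast}\rVert$. Your observation about the bracketing hypothesis is also apt: the paper likewise glosses over this point and simply asserts the bisection estimate, so the implicit requirement $T(\tau)\ge 1/\sigma$ (ensured in practice by \cref{algo2}) is needed in both arguments.
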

\begin{proof}
  Combining with the relationship 
  \begin{equation}\label{equ52}
    \left\{ {\begin{aligned}
      x^{k} &= \mathsf{P}\mathrm{rox}_{t^{k} f}\left(c\right)\\
      x^{\ast} &= \mathsf{P}\mathrm{rox}_{t^{\ast} f}\left(c\right)
      \end{aligned}} \right. ,
  \end{equation}
  we have 
  \begin{equation}\label{equ53}
    \left\{ {\begin{aligned}
      c-x^{k} &= t^{k}g^{k}\\
      c-x^{\ast} &= t^{\ast}g^{\ast}
      \end{aligned}} \right.,
  \end{equation}
  where $g^{k} \in \partial f(x^{k})$ and $g^{\ast} \in \partial f(x^{\ast})$. Note that
  \begin{equation}\label{equ54}
    \begin{aligned}
    \left\lVert x^{k}-x^{\ast}\right\rVert ^{2}   &= (x^{\ast}-x^{k})^{T}(x^{\ast}-x^{k})\\
      &=  (t^{k}g^{k}-t^{\ast}g^{\ast})^{T}(x^{\ast}-x^{k})\\
      & =  \left[t^{k}(g^{k}-g^{\ast})+(t^{k}-t^{\ast})g^{\ast}\right] ^{T}(x^{\ast}-x^{k})\\
      & \leq (t^{k}-t^{\ast}){g^{\ast}}^{T}(x^{\ast}-x^{k})\\
      & \leq \left\lvert t^{k}-t^{\ast}\right\rvert \left\lVert g^{\ast}\right\rVert \left\lVert x^{\ast}-x^{k}\right\rVert  .
    \end{aligned}
  \end{equation} 

  Since $T(t)$ is continuous and monotone, it can be easily obtained that
  \begin{equation}\label{eq17}
    \left\lvert t^{k}-t^{\ast}\right\rvert \leq \frac{\tau }{2^{k}} .
  \end{equation}
  Considering the assumption $x^{k} \neq x^{\ast}$, \cref{eq17} and \cref{equ54} implies that \cref{eq16} holds.
\end{proof}

The \cref{algo1}  requires the initial value $\tau_1$ to satisfies 
$T(\tau_1)>0$. The line search technique can be utilized to find the initial value, which is present in \cref{algo2}. The \cref{algo2}  will terminate after a finite number of iterations, because $\lim_{t \to \infty} T(t) = +\infty$.
\begin{algorithm} 
  \caption{Finding the initial value via line search}
  \label{algo2}
  \begin{algorithmic}[1]
  \STATE{Require: $\tau_0 >0$}
  \WHILE{$T(\tau_{k}) >\frac{1}{\sigma} $}
  \STATE{$k = k+1 $}
  \STATE{$\tau_{k} = 2\tau_{k-1} $}
  \ENDWHILE
  \RETURN{$\tau = \tau_{k}$}
  \end{algorithmic}
\end{algorithm}

\section{numerical experiments}
\label{sec:experiments}
In this section, we demonstrate the performance of \cref{algorithm3} with $f(x) = \frac{1}{2} x^{T}Ax+b^{T}x$ ($A\succeq 0$) and $f(x) = \left\lVert x\right\rVert _1$. 

\subsection{Quadratic function}
In this experiment, $f(x)$  is set to a quadratic function $\frac{1}{2} x^{T}Ax+b^{T}x$, where $A = \nabla ^{2}g(c)$, $b=\nabla g(c)$ and $g(x)$ is Log-sum-exp function \cite{nesterov2005smooth}, i.e.
\begin{equation}\label{eq29}
  g(x) = log (\sum_{i = 1}^{m} exp(a_i^{T}x-b_i)) .
\end{equation}
Here, we randomly generate $a_i$, $b_i$ and $c$, where $a_i$, $b_i$ and $c$ follows the normal distribution. The constant $\sigma $ in \cref{equ1} is set to 1, $n=1000$ and $m=2000$. The norm of the gradient of \cref{equ1} (i.e. $\left\lVert Ax+b+\left\lVert x-c\right\rVert^{p-1} (x-c)\right\rVert $)  is used as the measure of the performance of \cref{algorithm3}. The initial point is randomly generated. \cref{fig:fig1} shows the convergence curve for different values of $p$.
\begin{figure}[htbp]
  \centering
  \label{fig:a}\includegraphics[scale=0.3]{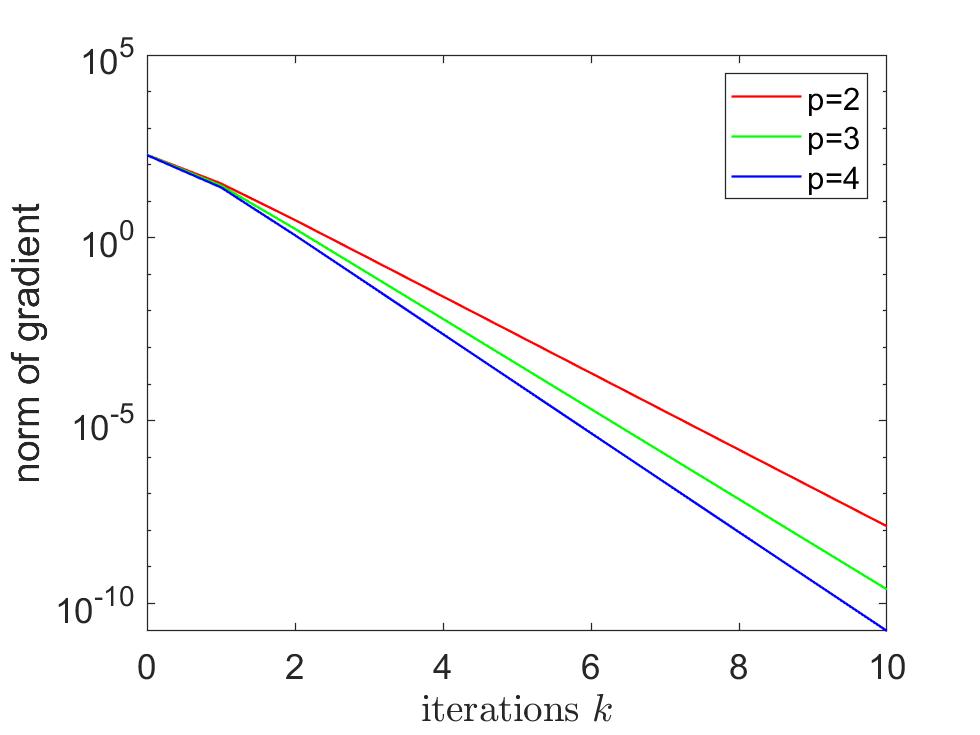}
    \caption{The curve of norms of gradient $Ax+b+\left\lVert x-c\right\rVert^{p-1} (x-c)$ with  $p=2,3,4$.}
  \label{fig:fig1}
\end{figure}

\subsection{ $l_1$ regularization function}
In this experiment, $f(x)$ is set to the $l_1$ regularization function $\left\lVert x\right\rVert _1$, where $\left\lVert x\right\rVert _1 = \sum_{i = 1}^{n} \left\lvert x_i\right\rvert$. The constant $\sigma $ in \cref{equ1} is set to 1, and $n=1000$. In this case, the optimal solution $x^{\ast}$ satisfies the following condition 
\begin{equation}\label{eq30}
  x^{\ast} = \mathsf{P}\mathrm{rox}_{\left\lVert \cdot \right\rVert _1 }\left(x^{\ast}-\left\lVert x^{\ast}-c\right\rVert (x^{\ast}-c)\right).
\end{equation}
Therefore, we define $G(x)=\left\lVert \mathsf{P}\mathrm{rox}_{\left\lVert \cdot \right\rVert _1 }\left(x-\left\lVert x-c\right\rVert (x-c)\right)-x\right\rVert $, and $G(x)$ is used as the measure of the performance of \cref{algorithm3}. The initial point is randomly generated. The convergence curve is shown in \cref{fig:fig2}.
\begin{figure}[htbp]
  \centering
  \label{fig:c}\includegraphics[scale=0.3]{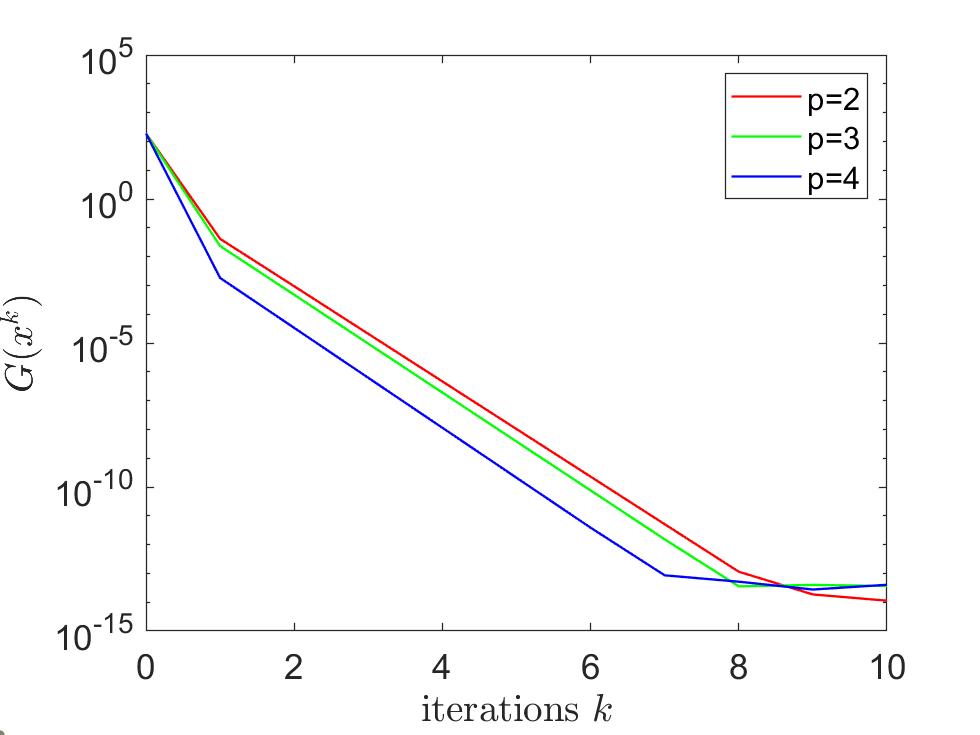}
  \caption{The curve of $G(x_k)$ for  $p=2,3,4$.}
  \label{fig:fig2}
\end{figure}

\section{Conclusions}
\label{sec:conclusions}
In this paper, we propose a linearly convergent method to solve the regularization problem \cref{equ1} based on the classical proximal operator. Moreover, for the special case $p=2$, \cref{equ1} can be transformed into  a one-dimensional monotonic continuity equation. Thus, the bisection method can be used to solve it. This work provides a novel approach to solving the $p$th-order proximal operator ($p>1$).

\appendix
\section{Supplementary proof}
\label{appendix}
\begin{theorem}\label{AppTheorem1}
  Let the sequence $\left\{\lambda^{k} \right\}_{1\leq k \leq  N} $ be generated by \cref{algorithm2} with the initial point $\lambda^{0}$. Suppose that $\left\lVert \lambda^{0}\right\rVert  \leq \left\lVert \lambda^{\ast}\right\rVert$, and $\lambda^{k} \neq \lambda^{\ast}, k = 1,2,...,N$. Then, we have 
 \begin{equation}\label{A1}
    \left\lVert \lambda^{N}-\lambda^{\ast}\right\rVert \leq (1-\frac{1}{p} )\left\lVert \lambda^{\ast}\right\rVert (e^{(1-\frac{1}{p} )^{N-1}\ln \frac{\left\lVert \lambda^{\ast}\right\rVert}{\left\lVert \lambda^{0}\right\rVert} }-1)\leq \frac{\left\lVert \lambda^{\ast}\right\rVert}{\left\lVert \lambda^{0}\right\rVert}  (\left\lVert \lambda^{\ast}\right\rVert -\left\lVert \lambda^{0}\right\rVert)(1-\frac{1}{p} )^{N}.
\end{equation} 
\end{theorem}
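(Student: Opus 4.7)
The plan is to mirror the proof of \cref{newTheorem5}, with all norm inequalities flipped. By \cref{newlemma5} applied under the hypothesis $\left\lVert \lambda^{0}\right\rVert \leq \left\lVert \lambda^{\ast}\right\rVert$, the sequence $\{\left\lVert \lambda^{k}\right\rVert\}$ is non-decreasing and bounded above by $\left\lVert \lambda^{\ast}\right\rVert$. In particular, the factor $\left\lVert \lambda^{k}\right\rVert^{1-1/p} - \left\lVert \lambda^{\ast}\right\rVert^{1-1/p}$ appearing in \cref{newlemma4} is non-positive and, by \cref{equ39}, so is $(\lambda^{k+1}-\lambda^{\ast})^{T}\lambda^{k+1}$.

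Next, I rewrite the quantity $M^{k}$ from the proof of \cref{newTheorem5} as the product of the absolute values of these two non-positive factors and apply Cauchy--Schwarz to the inner product, which yields
\[
M^{k} \leq (\left\lVert \lambda^{\ast}\right\rVert^{1-1/p} - \left\lVert \lambda^{k}\right\rVert^{1-1/p})\,\left\lVert \lambda^{k+1}-\lambda^{\ast}\right\rVert\, \left\lVert \lambda^{k+1}\right\rVert.
\]
Combining with the lower bound $\left\lVert \lambda^{k}\right\rVert^{1-1/p}\left\lVert \lambda^{k+1}-\lambda^{\ast}\right\rVert^{2} \leq M^{k}$ from \cref{newlemma4}, dividing by $\left\lVert \lambda^{k+1}-\lambda^{\ast}\right\rVert$, and then lower-bounding $\left\lVert \lambda^{k+1}-\lambda^{\ast}\right\rVert \geq \left\lVert \lambda^{\ast}\right\rVert - \left\lVert \lambda^{k+1}\right\rVert$ (valid precisely because $\left\lVert \lambda^{k+1}\right\rVert \leq \left\lVert \lambda^{\ast}\right\rVert$), the algebra collapses to
\[
\left\lVert \lambda^{k+1}\right\rVert \geq \left\lVert \lambda^{k}\right\rVert^{1-1/p}\left\lVert \lambda^{\ast}\right\rVert^{1/p}.
\]
Taking logarithms and iterating $N-1$ times produces $\ln(\left\lVert \lambda^{\ast}\right\rVert/\left\lVert \lambda^{N-1}\right\rVert) \leq (1-1/p)^{N-1}\ln(\left\lVert \lambda^{\ast}\right\rVert/\left\lVert \lambda^{0}\right\rVert)$.

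Finally, evaluating the derived inequality at index $k = N-1$ together with $\left\lVert \lambda^{N}\right\rVert \leq \left\lVert \lambda^{\ast}\right\rVert$ gives
\[
\left\lVert \lambda^{N}-\lambda^{\ast}\right\rVert \leq \left\lVert \lambda^{\ast}\right\rVert\bigl(e^{(1-1/p)\ln(\left\lVert \lambda^{\ast}\right\rVert/\left\lVert \lambda^{N-1}\right\rVert)} - 1\bigr).
\]
I then invoke the elementary fact that $(e^{x}-1)/x$ is non-decreasing on $(0,\infty)$ (a consequence of the convexity of $e^{x}$), which gives the scaling property $e^{\alpha x} - 1 \leq \alpha(e^{x}-1)$ for $\alpha \in [0,1]$ and $x \geq 0$. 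Applying this once with $\alpha = 1-1/p$ and $x = (1-1/p)^{N-1}\ln(\left\lVert \lambda^{\ast}\right\rVert/\left\lVert \lambda^{0}\right\rVert)$ and using the iterated log bound yields the middle inequality in \cref{A1}; applying it a second time with $\alpha = (1-1/p)^{N-1}$ and $x = \ln(\left\lVert \lambda^{\ast}\right\rVert/\left\lVert \lambda^{0}\right\rVert)$ converts $e^{x}-1$ into $(\left\lVert \lambda^{\ast}\right\rVert - \left\lVert \lambda^{0}\right\rVert)/\left\lVert \lambda^{0}\right\rVert$ and produces the right-most bound.

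The main obstacle is the careful sign bookkeeping in the second paragraph: both factors composing $M^{k}$ have flipped signs compared to the regime treated in \cref{newTheorem5}, so Cauchy--Schwarz must be applied to their absolute values before recombining, and the triangle inequality subsequently picks the opposite direction $\left\lVert \lambda^{\ast}\right\rVert - \left\lVert \lambda^{k+1}\right\rVert$. Verifying that the cancellations still produce a clean contraction factor $\left\lVert \lambda^{\ast}\right\rVert^{1/p}$ is the only non-routine step; everything after that is routine log-linearization.
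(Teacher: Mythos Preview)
Your proposal is correct and follows essentially the same route as the paper: flip the sign bookkeeping in \cref{newlemma4}--\cref{newlemma5}, derive the recursion $\left\lVert \lambda^{k+1}\right\rVert \geq \left\lVert \lambda^{k}\right\rVert^{1-1/p}\left\lVert \lambda^{\ast}\right\rVert^{1/p}$, iterate in log form, and finish with the concavity inequality $e^{\alpha x}-1\leq \alpha(e^{x}-1)$ applied twice. The only cosmetic difference is the order in which you apply monotonicity of $e^{x}-1$ versus the concavity inequality in the final chain; the paper first passes from $(\left\lVert \lambda^{\ast}\right\rVert/\left\lVert \lambda^{N-1}\right\rVert)^{1-1/p}-1$ to $(1-1/p)(\left\lVert \lambda^{\ast}\right\rVert/\left\lVert \lambda^{N-1}\right\rVert-1)$ and then invokes the log recursion, whereas you first substitute the log bound and then apply the concavity inequality---both orderings land on the same middle term.
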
 
\begin{proof}
  According to
  \cref{newlemma5}, for any $k\geq 0 $, $\left\lVert \lambda^{k}\right\rVert \leq \left\lVert \lambda^{\ast}\right\rVert$. Thus, we have
  \begin{multline} \label{A2}
    M^{k} = (\left\lVert \lambda^{k}\right\rVert ^{1-\frac{1}{p} }-\left\lVert \lambda^{\ast}\right\rVert ^{1-\frac{1}{p} })(\lambda^{k+1}-\lambda^{\ast})^{T}\lambda^{k+1}\\ \leq (\left\lVert \lambda^{\ast}\right\rVert ^{1-\frac{1}{p} }-\left\lVert \lambda^{k}\right\rVert ^{1-\frac{1}{p} })\left\lVert \lambda^{k+1}-\lambda^{\ast}\right\rVert \left\lVert \lambda^{k+1}\right\rVert.
  \end{multline}
  According to  \cref{newlemma4},  it holds that 
  \begin{equation}\label{A3}
    \begin{aligned}
      \left\lVert \lambda^{k}\right\rVert^{1-\frac{1}{p} }  \left\lVert \lambda^{k+1}-\lambda^{\ast}\right\rVert ^{2} &\leq  M^{k}\\
      &\leq  (\left\lVert \lambda^{\ast}\right\rVert ^{1-\frac{1}{p} }-\left\lVert \lambda^{k}\right\rVert ^{1-\frac{1}{p} })\left\lVert \lambda^{k+1}-\lambda^{\ast}\right\rVert \left\lVert \lambda^{k+1}\right\rVert.
    \end{aligned}
  \end{equation} 
  Note that $ \left\lVert \lambda^{k+1}-\lambda^{\ast}\right\rVert \geq \left\lVert \lambda^{\ast}\right\rVert-\left\lVert \lambda^{k+1}\right\rVert$, and combining with \cref{A3}, we have
  \begin{equation}\label{A4}
    \left\lVert \lambda^{k}\right\rVert ^{1-\frac{1}{p} }(\left\lVert \lambda^{\ast}\right\rVert -\left\lVert \lambda^{k+1}\right\rVert) \leq \left\lVert \lambda^{k+1}\right\rVert (\left\lVert \lambda^{\ast}\right\rVert^{1-\frac{1}{p} } -\left\lVert \lambda^{k}\right\rVert^{1-\frac{1}{p} }).
\end{equation} 
 By a simple manipulation, we obtain
  \begin{equation}\label{A5}
    \left\lVert \lambda^{k}\right\rVert^{1-\frac{1}{p} }\left\lVert \lambda^{\ast}\right\rVert^{\frac{1}{p} }  \leq \left\lVert \lambda^{k+1}\right\rVert.
  \end{equation}  
  Then,  
  \begin{equation}\label{A6}
    \begin{aligned}
      \ln \frac{\left\lVert \lambda^{\ast}\right\rVert }{\left\lVert \lambda^{k}\right\rVert}  &\leq  (1-\frac{1}{p} )\ln \frac{\left\lVert \lambda^{\ast}\right\rVert }{\left\lVert \lambda^{k-1}\right\rVert}\\
      &\leq  (1-\frac{1}{p} )^{k}\ln \frac{\left\lVert \lambda^{\ast}\right\rVert }{\left\lVert \lambda^{0}\right\rVert}, \quad k = 1,2,...N.
    \end{aligned}
  \end{equation} 
  Note that $\left\lVert \lambda^{k+1}\right\rVert \leq \left\lVert \lambda^{\ast}\right\rVert$, according to the inequality \cref{equ45}, we have
  \begin{equation}\label{A7}
    \begin{aligned}
    \left\lVert \lambda^{N}-\lambda^{\ast}\right\rVert   &\leq \frac{\left\lVert \lambda^{N}\right\rVert }{\left\lVert \lambda^{N-1}\right\rVert^{1-\frac{1}{p} } }  (\left\lVert \lambda^{\ast}\right\rVert^{1-\frac{1}{p} }-\left\lVert \lambda^{N-1}\right\rVert^{1-\frac{1}{p} })\\
      &\leq   \left\lVert \lambda^{\ast}\right\rVert \left\{(\frac{\left\lVert \lambda^{\ast}\right\rVert}{\left\lVert \lambda^{N-1}\right\rVert})^{1-\frac{1}{p}} -1\right\} \\
      & \leq (1-\frac{1}{p} )\left\lVert \lambda^{\ast}\right\rVert (\frac{\left\lVert \lambda^{\ast}\right\rVert}{\left\lVert \lambda^{N-1}\right\rVert}-1 )\\
      & \leq (1-\frac{1}{p} )\left\lVert \lambda^{\ast}\right\rVert (e^{(1-\frac{1}{p} )^{N-1}\ln \frac{\left\lVert \lambda^{\ast}\right\rVert}{\left\lVert \lambda^{0}\right\rVert} }-1)\\
      & \leq \frac{\left\lVert \lambda^{\ast}\right\rVert}{\left\lVert \lambda^{0}\right\rVert}  (\left\lVert \lambda^{\ast}\right\rVert -\left\lVert \lambda^{0}\right\rVert)(1-\frac{1}{p} )^{N}.
    \end{aligned}
  \end{equation} 
\end{proof}

\bibliographystyle{siamplain}
\bibliography{references}

\begin{thebibliography}{10}

\bibitem{boyd2011distributed}
{\sc S.~Boyd, N.~Parikh, E.~Chu, B.~Peleato, J.~Eckstein, et~al.}, {\em
  Distributed optimization and statistical learning via the alternating
  direction method of multipliers}, Foundations and Trends{\textregistered} in
  Machine learning, 3 (2011), pp.~1--122.

\bibitem{boyd2004convex}
{\sc S.~P. Boyd and L.~Vandenberghe}, {\em Convex optimization}, Cambridge
  university press, 2004.

\bibitem{combettes2007douglas}
{\sc P.~L. Combettes and J.-C. Pesquet}, {\em A douglas--rachford splitting
  approach to nonsmooth convex variational signal recovery}, IEEE Journal of
  Selected Topics in Signal Processing, 1 (2007), pp.~564--574.

\bibitem{hestenes1969multiplier}
{\sc M.~R. Hestenes}, {\em Multiplier and gradient methods}, Journal of
  optimization theory and applications, 4 (1969), pp.~303--320.

\bibitem{hong2017linear}
{\sc M.~Hong and Z.-Q. Luo}, {\em On the linear convergence of the alternating
  direction method of multipliers}, Mathematical Programming, 162 (2017),
  pp.~165--199.

\bibitem{lions1979splitting}
{\sc P.-L. Lions and B.~Mercier}, {\em Splitting algorithms for the sum of two
  nonlinear operators}, SIAM Journal on Numerical Analysis, 16 (1979),
  pp.~964--979.

\bibitem{moreau1965proximite}
{\sc J.-J. Moreau}, {\em Proximit{\'e} et dualit{\'e} dans un espace
  hilbertien}, Bulletin de la Soci{\'e}t{\'e} math{\'e}matique de France, 93
  (1965), pp.~273--299.

\bibitem{nesterov2005smooth}
{\sc Y.~Nesterov}, {\em Smooth minimization of non-smooth functions},
  Mathematical programming, 103 (2005), pp.~127--152.

\bibitem{nesterov2021implementable}
{\sc Y.~Nesterov}, {\em Implementable tensor methods in unconstrained convex
  optimization}, Mathematical Programming, 186 (2021), pp.~157--183.

\bibitem{nesterov2021inexact}
{\sc Y.~Nesterov}, {\em Inexact accelerated high-order proximal-point methods},
  Mathematical Programming,  (2021), pp.~1--26.

\bibitem{nesterov2021inexact2}
{\sc Y.~Nesterov}, {\em Inexact high-order proximal-point methods with
  auxiliary search procedure}, SIAM Journal on Optimization, 31 (2021),
  pp.~2807--2828.

\bibitem{nesterov2022quartic}
{\sc Y.~Nesterov}, {\em Quartic regularity}, arXiv preprint arXiv:2201.04852,
  (2022).

\bibitem{nesterov2006cubic}
{\sc Y.~Nesterov and B.~T. Polyak}, {\em Cubic regularization of newton method
  and its global performance}, Mathematical Programming, 108 (2006),
  pp.~177--205.

\bibitem{powell1969method}
{\sc M.~J. Powell}, {\em A method for nonlinear constraints in minimization
  problems}, Optimization,  (1969), pp.~283--298.

\bibitem{ryu2022large}
{\sc E.~K. Ryu and W.~Yin}, {\em Large-scale convex optimization: algorithms \&
  analyses via monotone operators}, Cambridge University Press, 2022.

\end{thebibliography}
\end{document}